\documentclass[reqno]{amsart}
\usepackage[utf8]{inputenc}
\usepackage[top=1.4in, bottom=1.2in, left=1.2in, right=1.2in]{geometry}

\usepackage{amssymb,amsfonts,amsmath,amsthm,tikz,tikz-cd,graphicx,url,mathtools,thmtools}
\usepackage[mode=buildnew]{standalone}
\usepackage{xcolor}
\usepackage[all,arc]{xy}
\usepackage{enumerate}
\usepackage[colorlinks=true]{hyperref}
\usepackage[alphabetic]{amsrefs}
\usepackage{comment}
\usepackage{subfigure}
\usetikzlibrary{matrix}
\usepackage{cmap}
\usepackage{xparse}
\usepackage{stmaryrd}

 \usepackage[sc,osf]{mathpazo}
 \usepackage{upgreek}
 \usepackage{optparams,eurosym}

\newtheorem{thm}{Theorem}[section]

\newtheorem{prop}[thm]{Proposition}
\newtheorem{lem}[thm]{Lemma}

\theoremstyle{definition}

\newtheorem{exmp}[thm]{Example}

\theoremstyle{remark}
\newtheorem{rem}[thm]{Remark}

\newcommand{\Z}{\mathbb Z}
\newcommand{\RP}{\mathbb{RP}}
\newcommand{\ZZ}{\mathbb{Z}/2}
\newcommand{\QQ}{\mathbb{Q}}
\newcommand{\RR}{\mathbb{R}}
\newcommand{\FF}{\mathbb{F}}

\DeclareMathOperator{\idd}{id}

\DeclareDocumentCommand{\BNP}{o}{%
  \mathbf{BN}_{\mathbb{RP}^2\IfValueT{#1}{,#1}}%
}
\DeclareDocumentCommand{\LinkP}{o}{%
  \mathbf{Links}_{\mathbb{RP}^3\IfValueT{#1}{,#1}}^\circ%
}
\DeclareDocumentCommand{\LinkIP}{o}{%
  \mathbf{Links}_{I\tilde\times\mathbb{RP}^2\IfValueT{#1}{,#1}}^\circ%
}
\DeclareDocumentCommand{\LinkUP}{o}{%
  \mathbf{Links}_{u\mathbb{RP}^3\IfValueT{#1}{,#1}}%
}
\newcommand{\Kbpm}{\mathbf K_{/\pm}^b}

\DeclareMathOperator{\Diff}{Diff^+}
\DeclareMathOperator{\Diffb}{Diff_*^+}

\DeclareMathOperator{\Kh}{Kh}

\newcommand{\lrb}[1]{\llbracket #1 \rrbracket}

\DeclareMathOperator{\SO}{SO}
\DeclareMathOperator{\sw}{sw}

\DeclareMathOperator{\fl}{fl}

\title{Intrinsic Khovanov homology in $\RP^3$}

\author{Qiuyu Ren}
\address{Department of Mathematics, University of California, Berkeley, Berkeley, CA 94720, USA}
\email{qiuyu\_ren@berkeley.edu}

\author{Hongjian Yang}
\address{Department of Mathematics, Stanford University, Stanford, CA 94305, USA}
\email{yhj@stanford.edu}

%auto-ignore
%this ensures the arxiv doesn't try to start TeXing here.
%!TEX root=main.tex

\tikzset{anchorbase/.style={baseline={([yshift=-0.5ex]current bounding box.center)}}}
\usetikzlibrary{calc}
\usetikzlibrary{decorations.markings}
\usetikzlibrary{decorations.pathreplacing}
\usetikzlibrary{arrows,shapes,positioning}
%%multiple arrow options
\tikzstyle directed=[postaction={decorate,decoration={markings,
    mark=at position #1 with {\arrow{>}}}}]
\tikzstyle rdirected=[postaction={decorate,decoration={markings,
    mark=at position #1 with {\arrow{<}}}}]

\newcommand{\classimiddlemove}{
\begin{tikzpicture}[anchorbase,scale=.7]

    \draw[thick,->] (-8,0)--(-7,0);
    \node[font=\small] at (-7.5,0.5) {\text{RII}};
    \draw[thick,->] (-3,0) to (-2,0);
    \node[font=\small] at (-2.5,0.5) {\text{isotopy}};
    \draw[thick,->] (2,0) to (3,0);
    \node[font=\small] at (2.5,0.5) {\text{RIII}};
    \draw[thick,->] (7,0) to (8,0);
    \node[font=\small] at (7.5,0.5) {\text{RII}};

    \node at (-10,0) {
    \begin{tikzpicture}[scale=.3]
        \draw (-2,0) rectangle (2,5);
        \draw [very thick] (-1,0) to (-1,5);
        \draw [very thick] (0,0) to (0,5);
        \draw [very thick] (1,0) to (1,5);
    \end{tikzpicture}
    };
    
\node at (-5,0) {
    \begin{tikzpicture}[anchorbase,scale=.3]
        \draw (-1,0) rectangle (3,5);
        \draw [very thick] (0,0) to [out=90,in=270] (2,2.5) to [out=90,in=270] (0,5);
        \draw [white,line width=0.15cm] (1,0) to [out=90,in=270](2,1.25)to [out=90,in=270]  (1,2.5) to[out=90,in=270] (2,3.75) to[out=90,in=270] (1,5);
        \draw [very thick] (1,0) to [out=90,in=270](2,1.25)to [out=90,in=270]  (1,2.5) to[out=90,in=270] (2,3.75) to[out=90,in=270] (1,5);
        \draw [white,line width=0.15cm] (2,0) to [out=90,in=270](0,2.5)to [out=90,in=270]  (2,5);
        \draw [very thick] (2,0) to [out=90,in=270](0,2.5)to [out=90,in=270]  (2,5);
    \end{tikzpicture}
};

\node at (0,0) {
    \begin{tikzpicture}[anchorbase,scale=.3]
        \draw (-1,0) rectangle (3,5);
        \draw [very thick] (2,0) to [out=90,in=270] (0,2.5) to [out=90,in=270] (2,5);
    
        \draw [white,line width=0.15cm] (1,0) to [out=90,in=270](2,1.25)to [out=90,in=270] (0,3.75) to[out=90,in=270] (1,5);
        \draw [very thick] (1,0) to [out=90,in=270](2,1.25)to [out=90,in=270] (0,3.75) to[out=90,in=270] (1,5);
        
        \draw [white,line width=0.15cm] (0,0) to [out=90,in=270](2,2.5)to [out=90,in=270]  (0,5);
        \draw [very thick] (0,0) to [out=90,in=270](2,2.5)to [out=90,in=270]  (0,5);
    \end{tikzpicture}
};

\node at (5,0) {
    \begin{tikzpicture}[anchorbase,scale=.3]
        \draw (-1,0) rectangle (3,5);
        \draw [very thick] (2,0) to [out=90,in=270] (0,2.5) to [out=90,in=270] (2,5);
        
        \draw [white,line width=0.15cm] (1,0) to [out=90,in=270](0,1.25)to [out=90,in=270] (1,2.5) to [out=90,in=270] (0,3.75) to[out=90,in=270] (1,5);
        \draw [very thick] (1,0) to [out=90,in=270](0,1.25)to [out=90,in=270] (1,2.5) to [out=90,in=270] (0,3.75) to[out=90,in=270] (1,5);
        
        \draw [white,line width=0.15cm] (0,0) to [out=90,in=270](2,2.5)to [out=90,in=270]  (0,5);
        \draw [very thick] (0,0) to [out=90,in=270](2,2.5)to [out=90,in=270]  (0,5);
    \end{tikzpicture}
};

\node at (10,0) {
    \begin{tikzpicture}[scale=.3]
        \draw (-2,0) rectangle (2,5);
        \draw [very thick] (-1,0) to (-1,5);
        \draw [very thick] (0,0) to (0,5);
        \draw [very thick] (1,0) to (1,5);
    \end{tikzpicture}
};
\end{tikzpicture}
}

%================================
%figure for cylindrical moves

\newcommand{\movieconst}{
    \coordinate (O) at (0,0); 
    \coordinate (T) at (0,4); 
    \draw[thick] ($(O) + (-2,0)$) -- ($(T) + (-2,0)$); 
    \draw[thick] ($(O) + (2,0)$) -- ($(T) + (2,0)$);
    
    \draw[dashed, thick] ($(O) + (2,0)$) arc (0:180:2 and 0.6) coordinate[pos=0.25] (P1) coordinate[pos=0.75] (P4);
    \draw[thick] ($(O) - (2,0)$) arc (180:360:2 and 0.6)  coordinate[pos=0.25] (P3) coordinate[pos=0.75] (P2);

    \draw[thick] ($(T) + (2,0)$) arc (0:180:2 and 0.6) coordinate[pos=0.25] (P5) coordinate[pos=0.75] (P8);
    \draw[thick] ($(T) - (2,0)$) arc (180:360:2 and 0.6)  coordinate[pos=0.25] (P7) coordinate[pos=0.75] (P6);

    \node at (0,2) {
        \begin{tikzpicture}[anchorbase,scale=0.15]
            \draw[thick] (0,0)--(4,0)--(6,2)--(2,2)--(0,0);
            \draw[thick] (0.5,0)--(-0.5,-1) (3.5,0)--(2.5,-1)
                        (2.5,2)--(3.5,3) (5.5,2)--(6.5,3);
            \node at (1.5,-0.5) {\text{...}};
            \node at (4.5,2.5) {\text{...}};
            \node[font=\tiny,xslant=0.75] at (3,1) {$\textbf{T}$};
        \end{tikzpicture}
    };
}

\newcommand{\moveA}[1]{
    \begin{tikzpicture}[anchorbase,scale=#1]
        \movieconst
        \draw[->,very thick,blue] ($(T)+(-1.3,0)$) arc (90:270:0.3 and 0.1);
        \draw[very thick,blue] ($(T)+(-1.3,-0.2)$) arc (-90:90:0.3 and 0.1);
    \end{tikzpicture}
}

\newcommand{\moveB}[1]{
    \begin{tikzpicture}[anchorbase,scale=#1]
        \movieconst
        \draw[very thick,blue] (P1) to[out=240,in=90] ($(O)+(1.3,0)$);
        \draw[very thick,<-,blue] ($(O)+(1.3,0)$) to[out=270,in=120] (P2);
        \draw[very thick,-<,blue] (P7) to[out=60,in=90] ($(T)-(1.3,0)$);
        \draw[very thick,blue] (P8) to[out=-60,in=90] ($(T)-(1.3,0)$);
    \end{tikzpicture}
}

\newcommand{\moveC}[1]{
    \begin{tikzpicture}[anchorbase,scale=#1]
        \movieconst
        \draw[very thick,blue] (P5) to[out=240,in=90] ($(T)+(1.3,0)$);
        \draw[very thick,<-,blue] ($(T)+(1.3,0)$) to[out=270,in=120] (P6);
        \draw[very thick,-<,blue] (P3) to[out=60,in=90] ($(O)-(1.3,0)$);
        \draw[very thick,blue] (P4) to[out=-60,in=90] ($(O)-(1.3,0)$);
    \end{tikzpicture}
}

\newcommand{\moveD}[1]{
    \begin{tikzpicture}[anchorbase,scale=#1]
        \movieconst
        \draw[very thick,blue] ($(T)+(1.3,0.2)$) arc (90:270:0.3 and 0.1);
        \draw[very thick,<-,blue] ($(T)+(1.3,0)$) arc (-90:90:0.3 and 0.1);
    \end{tikzpicture}
}

\newcommand{\moveE}[1]{
     \begin{tikzpicture}[anchorbase,scale=#1]
        \movieconst
        \draw[very thick,blue] ($(T)-(1.3,0)$) arc (90:270:0.3 and 0.1);
        \draw[very thick,<-,blue] ($(T)-(1.3,0.2)$) arc (-90:90:0.3 and 0.1);
    \end{tikzpicture}
}

%==========================
%figure for diagrammatic movie moves

\newcommand{\diagmovconst}{
    \draw[rounded corners] (0,0) rectangle (6,6);

    \draw[very thick] (2.2,2.6) rectangle (3.8,3.4);
    \node[font=\tiny] at (3,3) {\text{$T$}};
    \node at (3,2) {\text{...}};
    \node at (3,4) {\text{...}};
    \draw[white,line width=0.15cm] (2.4,0.1)--(2.4,2.5) (2.4,3.5)--(2.4,5.9) (3.6,0.1)--(3.6,2.5) (3.6,3.5)--(3.6,5.9);
    \draw[very thick] (2.4,0)--(2.4,2.6) (2.4,3.4)--(2.4,6) (3.6,0)--(3.6,2.6) (3.6,3.4)--(3.6,6);   
}

\newcommand{\diagmovA}[1]{
\begin{tikzpicture}[anchorbase,scale=#1]
    \diagmovconst
    \draw[blue,very thick] (1,3) circle (0.3);
\end{tikzpicture}
}

\newcommand{\diagmovB}[1]{
\begin{tikzpicture}[anchorbase,scale=#1]
    \diagmovconst
    \draw[blue,very thick] (1,0) to[out=75,in=-75] (1,6);
    \draw[blue,very thick] (5,0) to[out=105,in=-105] (5,6);
\end{tikzpicture}
}

\newcommand{\diagmovC}[1]{
\begin{tikzpicture}[anchorbase,scale=#1]
    \draw[blue,very thick] (1,0)--(1,0.5) to[out=90,in=-90] (5,3) to[out=90,in=-90] (1,5.5)--(1,6);
    \diagmovconst
    \draw[white, line width=0.15cm] (5,0) to[out=90,in=-90] (1,2)--(1,4) to[out=90,in=-90] (5,6);
    \draw[blue,very thick] (5,0) to[out=90,in=-90] (1,2)--(1,4) to[out=90,in=-90] (5,6);
\end{tikzpicture}
}

\newcommand{\diagmovD}[1]{
\begin{tikzpicture}[anchorbase,scale=#1]
    \draw[rounded corners] (0,0) rectangle (6,6);
    
    \draw[blue,very thick] (5,6)--(5,5.2) to[out=-90,in=90] (1,2.6) to[out=-90,in=90] (5,0.3)--(5,0);
    
    \draw[very thick] (2.2,4.6) rectangle (3.8,5.4);
    \node[font=\tiny] at (3,5) {\text{$T$}};
    \node at (3,3) {\text{...}};
    \node at (3,5.7) {\text{...}};
    \draw[white,line width=0.15cm] (2.4,0.1)--(2.4,4.5) (2.4,5.5)--(2.4,5.9) (3.6,0.1)--(3.6,4.5) (3.6,5.5)--(3.6,5.9);
    \draw[very thick] (2.4,0)--(2.4,4.6) (2.4,5.4)--(2.4,6) (3.6,0)--(3.6,4.6) (3.6,5.4)--(3.6,6);  

    \draw[white, line width=0.15cm] (1,5.9)--(1,4.5) to[out=-90,in=90] (5,1.8) to[out=-90,in=90] (1,0.1);
    \draw[blue,very thick] (1,6)--(1,4.5) to[out=-90,in=90] (5,1.8) to[out=-90,in=90] (1,0);

\end{tikzpicture}
}

\newcommand{\diagmovE}[1]{
\begin{tikzpicture}[anchorbase,scale=#1]
    \diagmovconst
    \draw[blue,very thick] (5,3) circle (0.3);
\end{tikzpicture}
}

\newcommand{\diagmovF}[1]{
\begin{tikzpicture}[anchorbase,scale=#1]
    \diagmovconst
    \draw[white, line width=0.15cm] (3,3) circle (2.2);
    \draw[blue,very thick] (3,3) circle (2.2);
\end{tikzpicture}
}

%===================
%neck cutting

\newcommand{\twoelp}{
    \coordinate (O) at (0,0); 
    \coordinate (T) at (0,3); 
    \draw[dashed, thick] ($(O) + (1,0)$) arc (0:180:1 and 0.4);
    \draw[thick] ($(O) - (1,0)$) arc (180:360:1 and 0.4);
    \draw[thick] ($(T) + (1,0)$) arc (0:180:1 and 0.4);
    \draw[thick] ($(T) - (1,0)$) arc (180:360:1 and 0.4) ;
}

%====================
%type i monodromy

\newcommand{\typeimonodromy}{
\begin{tikzpicture}[anchorbase,scale=.7]

    \draw[thick,->] (-3,0) to (-2,0);
    \draw[thick,->] (2,0) to (3,0);
    \draw[thick,->] (5,0) to (6,0);

    \node at (-5,0) {
    \begin{tikzpicture}[scale=.4]
        \draw[rounded corners] (-2.5,0) rectangle (2.5,5);
        \draw[very thick] (-0.7,0)--(-0.7,5) (0.7,0)--(0.7,5);
        \fill[white] (-1,2) rectangle (1,3);
        \draw[very thick](-1,2) rectangle (1,3);
        \node[font=\tiny] at (0,2.5) {\text{$T$}};
        \node at (0,1) {\text{...}};
        \node at (0,4) {\text{...}};
    \end{tikzpicture}
    };
    
\node[rotate=90] at (0,0) {
    \begin{tikzpicture}[anchorbase,scale=.4]
        \draw[rounded corners] (-2.5,0) rectangle (2.5,5);
        \draw[very thick] (-0.7,0)--(-0.7,5) (0.7,0)--(0.7,5);
        \fill[white] (-1,2) rectangle (1,3);
        \draw[very thick](-1,2) rectangle (1,3);
        \node[font=\tiny] at (0,2.5) {\text{$T$}};
        \node at (0,1) {\text{...}};
        \node at (0,4) {\text{...}};   
    \end{tikzpicture}
};

\node at (4,0) {\text{...}};

\node at (8,0) {
    \begin{tikzpicture}[scale=.4]
        \draw[rounded corners] (-2.5,0) rectangle (2.5,5);
        \draw[very thick] (-0.7,0)--(-0.7,5) (0.7,0)--(0.7,5);
        \fill[white] (-1,2) rectangle (1,3);
        \draw[very thick](-1,2) rectangle (1,3);
        \node[font=\tiny] at (0,2.5) {\text{$T$}};
        \node at (0,1) {\text{...}};
        \node at (0,4) {\text{...}};
    \end{tikzpicture}
};

\end{tikzpicture}
}

%====================
%type ii monodromy

\newcommand{\typeiimonodromy}{
\begin{tikzpicture}[anchorbase,scale=.7]

    \draw[thick,->] (-3.5,-4)--(-2.5,-4);
    \node[font=\small] at (-9,-3.5) {\text{isotopy}};
    \draw[thick,->] (-3.5,0) to (-2.5,0);
    \node[font=\small] at (-3,0.5) {\text{RII \& RIII}};
    %\node[font=\small] at (-2.5,-0.5) {\text{moves}};
    \draw[thick,->] (2.5,0) to (3.5,0);
    \node[font=\small] at (3,0.5) {\text{RIII}};
    \draw[thick,->] (2.5,-4) to (3.5,-4);
    \node[font=\small] at (-3,-3.5) {\text{isotopy}};
    \node[font=\small] at (3,-3.5) {\text{isotopy}};
    \draw[thick,->] (-9.5,-4) to (-8.5,-4);

    \node at (-6,0) {
    \begin{tikzpicture}[scale=.4]
        \draw[rounded corners] (-2.5,0) rectangle (2.5,5);
        \draw[very thick] (-0.7,0)--(-0.7,5) (0.7,0)--(0.7,5);
        \fill[white] (-1,2) rectangle (1,3);
        \draw[very thick](-1,2) rectangle (1,3);
        \node[font=\tiny] at (0,2.5) {\text{$\beta$}};
        \node at (0,1) {\text{...}};
        \node at (0,4) {\text{...}};
    \end{tikzpicture}
    };

    \node at (0,0) {
    \begin{tikzpicture}[scale=.4]
        \draw[rounded corners] (-2.5,0) rectangle (2.5,5);
        \draw[very thick] (-0.7,0)--(-0.7,5) (0.7,0)--(0.7,5);
        \fill[white] (-1,2) rectangle (1,3);
        \draw[very thick](-1,2) rectangle (1,3);
        \node[font=\tiny] at (0,2.5) {\text{$\beta$}};
        \fill[white] (-1,3.2) rectangle (1,4.2);
        \draw[very thick](-1,3.2) rectangle (1,4.2);
        \node[font=\tiny] at (0,3.7) {\text{$\Delta^{-1}$}};
        \fill[white] (-1,0.8) rectangle (1,1.8);
        \draw[very thick](-1,0.8) rectangle (1,1.8);
        \node[font=\tiny] at (0,1.3) {\text{$\Delta$}};
        \node at (0,0.4) {\text{...}};
        \node at (0,4.6) {\text{...}};
    \end{tikzpicture}
};

\node at (6,0) {
    \begin{tikzpicture}[anchorbase,scale=.4]
        \draw[rounded corners] (-2.5,0) rectangle (2.5,5);
        \draw[very thick] (-0.7,0)--(-0.7,5) (0.7,0)--(0.7,5);
        \fill[white] (-1,2) rectangle (1,3);
        \draw[very thick](-1,2) rectangle (1,3);
        \node[font=\tiny] at (0,2.5) {\text{$\beta^*$}};
        \node at (0,1) {\text{...}};
        \node at (0,4) {\text{...}};
    \end{tikzpicture}
};

\node at (-6,-4) {
    \begin{tikzpicture}[anchorbase,scale=.4]
        \draw[rounded corners] (-2.5,0) rectangle (2.5,5);
        \draw[very thick] (-0.7,0)--(-0.7,5) (0.7,0)--(0.7,5);
        \fill[white] (-1,1) rectangle (1,2);
        \draw[very thick](-1,1) rectangle (1,2);
        \node[font=\tiny] at (0,1.5) {\text{$\beta^*$}};
        \node at (0,3.5) {\text{...}};
        \node at (0,0.5) {\text{...}};
    \end{tikzpicture}
};

\node at (0,-4) {
    \begin{tikzpicture}[anchorbase,scale=.4]
        \draw[rounded corners] (-2.5,0) rectangle (2.5,5);
        \draw[very thick] (-0.7,0)--(-0.7,5) (0.7,0)--(0.7,5);
        \fill[white] (-1,3.5) rectangle (1,4.5);
        \draw[very thick](-1,3.5) rectangle (1,4.5);
        \node[font=\tiny] at (0,4) {\text{$\beta$}};
        \node at (0,1.75) {\text{...}};
        \node at (0,4.75) {\text{...}};
    \end{tikzpicture}
};

\node at (6,-4) {
    \begin{tikzpicture}[scale=.4]
        \draw[rounded corners] (-2.5,0) rectangle (2.5,5);
        \draw[very thick] (-0.7,0)--(-0.7,5) (0.7,0)--(0.7,5);
        \fill[white] (-1,2) rectangle (1,3);
        \draw[very thick](-1,2) rectangle (1,3);
        \node[font=\tiny] at (0,2.5) {\text{$\beta$}};
        \node at (0,1) {\text{...}};
        \node at (0,4) {\text{...}};
    \end{tikzpicture}
};
\end{tikzpicture}
}

%==============================
%class0middlemvoe

\newcommand{\classomiddlemove}{
\begin{tikzpicture}[anchorbase,scale=.7]

    \draw[thick,->] (-8,0)--(-7,0);
    \node[font=\small] at (-7.5,0.5) {\text{RII}};
    \draw[thick,->] (-3,0) to (-2,0);
    \node[font=\small] at (-2.5,0.5) {\text{isotopy}};
    \draw[thick,->] (2,0) to (3,0);
    \node[font=\small] at (2.5,0.5) {\text{RII}};

    \node at (-10,0) {
    \begin{tikzpicture}[scale=.3]
        \draw (-2,0) rectangle (2,5);
        \draw [very thick] (-0.5,0) to (-0.5,5);
        \draw [very thick] (0.5,0) to (0.5,5);
    \end{tikzpicture}
    };
    
\node at (-5,0) {
    \begin{tikzpicture}[anchorbase,scale=.3]
        \draw (-1,0) rectangle (3,5);
        \draw [very thick] (0.5,0) to [out=90,in=270] (1.5,2.5) to [out=90,in=270] (0.5,5);
        \draw [white,line width=0.15cm] (1.5,0) to [out=90,in=270](0.5,2.5)to [out=90,in=270]  (1.5,5);
        \draw [very thick] (1.5,0) to [out=90,in=270](0.5,2.5)to [out=90,in=270]  (1.5,5);
    \end{tikzpicture}
};

\node at (0,0) {
    \begin{tikzpicture}[anchorbase,scale=.3]
        \draw (-1,0) rectangle (3,5);
        \draw [very thick] (1.5,0) to [out=90,in=270] (0.5,2.5) to [out=90,in=270] (1.5,5);
        \draw [white,line width=0.15cm] (0.5,0) to [out=90,in=270](1.5,2.5)to [out=90,in=270]  (0.5,5);
        \draw [very thick] (0.5,0) to [out=90,in=270](1.5,2.5)to [out=90,in=270]  (0.5,5);
    \end{tikzpicture}
};

\node at (5,0) {
    \begin{tikzpicture}[scale=.3]
        \draw (-2,0) rectangle (2,5);
        \draw [very thick] (-0.5,0) to (-0.5,5);
        \draw [very thick] (0.5,0) to (0.5,5);
    \end{tikzpicture}
};
\end{tikzpicture}
}

%==================================

\newcommand{\classitypeiimove}{
\begin{tikzpicture}[anchorbase,scale=.7]

    \draw[thick,->] (-8,0)--(-7,0);
    \node[font=\small] at (-7.5,0.5) {\text{flip}};
    \draw[thick,->] (-3,0) to (-2,0);
    \node[font=\small] at (-2.5,0.5) {\text{RII}};
    \draw[thick,->] (2,0) to (3,0);
    \node[font=\small] at (2.5,0.5) {\text{isotopy}};

    \node at (-10,0) {
    \begin{tikzpicture}[scale=.4]
        \draw[rounded corners] (-2.5,0) rectangle (2.5,5);
        \draw[very thick] (0,0)--(0,5);
        \draw[very thick] (-1,2.5) circle (0.5);
    \end{tikzpicture}
    };
    
\node at (-5,0) {
    \begin{tikzpicture}[scale=.4]
        \draw[rounded corners] (-2.5,0) rectangle (2.5,5);
        \draw[very thick] (0,0)--(0,5);
        \draw[very thick] (-1,2.5) circle (0.5);
    \end{tikzpicture}
};

\node at (0,0) {
    \begin{tikzpicture}[scale=.4]
        \draw[rounded corners] (-2.5,0) rectangle (2.5,5);
        \draw[very thick] (0,0)--(0,5);
        \draw[very thick] (1,2.5) circle (0.5);
    \end{tikzpicture}
};

\node at (5,0) {
    \begin{tikzpicture}[scale=.4]
        \draw[rounded corners] (-2.5,0) rectangle (2.5,5);
        \draw[very thick] (0,0)--(0,5);
        \draw[very thick] (-1,2.5) circle (0.5);
    \end{tikzpicture}
};

\end{tikzpicture}
}

\begin{document}

\begin{abstract}
We prove that Khovanov homology is an invariant of links in unparametrized $\RP^3$'s, i.e., oriented $3$-manifolds diffeomorphic to $\RP^3$. Along the way, we establish the functoriality of Khovanov homology for link cobordisms in $I\times\RP^3$.
\end{abstract}

\maketitle

\setlength{\parskip}{0.25\baselineskip}

\section{Introduction}

People's understanding of functoriality of Khovanov homology \cite{khovanov2000categorification} has gradually deepened over time \cites{jacobsson2004invariant,khovanov2002functor,bar2005khovanov,blanchet2010oriented}. However, the aforementioned literature only considers Khovanov homology as an invariant of links in a \textit{concrete}, or \textit{parametrized}, $\RR^3$ and proves the functoriality for link cobordisms in $I\times \RR^3$. To define $\mathfrak{gl}_N$ skein lasagna modules, Morrison, Walker, and Wedrich \cite{morrison2022invariants} improve the functoriality statement in the following two aspects. First, they prove the functoriality of Khovanov homology (in fact, the $\mathfrak{gl}_N$ link homology \cite{khovanov2008matrix}) for link cobordisms in $I\times S^3$ (rather than $I\times\RR^3$). Second, they prove that Khovanov homology is a functorial invariant of links in an \textit{unparametrized} (oriented) $S^3$, i.e., an oriented $3$-manifold diffeomorphic to $S^3$ via some unspecified diffeomorphism to $S^3$.

In a different direction, a major open problem is to generalize Khovanov homology to links in other $3$-manifolds. In the case of $\RP^3$---arguably the next simplest closed $3$-manifold---Khovanov-type homology has been defined and widely studied \cites{asaeda2004categorification,Gabrovek2013THECO,manolescu2025rasmussen,chen2021khovanov,chen2025bar,yang2025instantons}. As in $S^3$, to define Khovanov homology in $\RP^3$, one removes a point and uses the twisted $I$-bundle structure on $\RP^3\backslash\{*\}$ to make sense of link diagrams (on $\RP^2$). Then, the construction of Asaeda--Przytycki--Sikora \cite{asaeda2004categorification} specializes to define Khovanov homology for links in $\RP^3$, with the caveat that for null-homologous links the theory is only defined over $\FF_2$ instead of $\Z$. We note that Bar-Natan's canopolis formalism \cite{bar2005khovanov} works in the setting of \cite{asaeda2004categorification} for oriented $I$-bundles over surfaces with mild modifications, implying the functoriality of Khovanov homology for link cobordisms in $I\times (\RP^3\backslash\{*\})$, where one takes $\FF_2$ coefficients for cobordisms between null-homologous links. However, the functoriality in $I\times\RP^3$ was previously unknown.

This paper concerns the question of defining Khovanov homology for links in unparametrized $\RP^3$'s, \textit{\`a la} Morrison--Walker--Wedrich \cite{morrison2022invariants}. We state our main theorem in an informal form as follows.

\begin{thm}\label{thm:main thm}
    Khovanov homology in $\RP^3$, with $\FF_2$ coefficients for null-homologous links and $\Z$ coefficients for homologically essential links, upgrades to an invariant of links in unparametrized $\RP^3$'s.
\end{thm}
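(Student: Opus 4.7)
The plan is to follow the broad strategy of Morrison--Walker--Wedrich \cite{morrison2022invariants} in the $S^3$ setting, adapted to $\RP^3$. The argument proceeds in two stages: first establishing functoriality for link cobordisms in $I\times\RP^3$ (the intermediate result advertised in the abstract), and then removing the dependence on the parametrization of $\RP^3$.

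For the first stage, I would start from the observation that the Bar-Natan canopolis formalism \cite{bar2005khovanov} transplants with minimal changes to an oriented $I$-bundle over a closed surface; applied to the twisted $I$-bundle $\RP^3\setminus\{*\}\cong I\tilde\times\RP^2$, this produces a Khovanov-type functor from the cobordism category $\LinkIP$ (with $\FF_2$ coefficients on null-homologous strata) into the category $\Kbpm$ of chain complexes over $\BNP$ up to sign and chain homotopy, as already suggested in the introduction. To upgrade this to a functor on cobordisms in $I\times\RP^3$, I would identify a finite generating list of \emph{point-push} moves, i.e., local moves capturing the difference between isotopies in $I\times(\RP^3\setminus\{*\})$ and isotopies in $I\times\RP^3$ in which a strand or cobordism sheet passes through the removed point. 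Diagrammatically on $\RP^2$ these appear as isotopies that drag a small portion of a tangle diagram across the disk-at-infinity; after decomposing each such move into Reidemeister-type pieces together with an analogue of the MWW ``sweep-around'' move, invariance of the Khovanov chain complex up to canonical chain homotopy equivalence reduces to a finite verification inside $\BNP$.

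For the second stage, I would invoke the Smale-type conjecture for $\RP^3$, now a theorem, which gives $\Diff(\RP^3)\simeq \SO(4)/\{\pm I\}$; in particular $\pi_0\Diff(\RP^3)=0$. Thus any two parametrizations $\phi_0,\phi_1\colon\RP^3\to M$ of an unparametrized $\RP^3$ are isotopic, and any chosen isotopy traces out a product cobordism in $I\times\RP^3$ between $\phi_0^{-1}(L)$ and $\phi_1^{-1}(L)$. The functor of Stage~1 turns this into a chain homotopy equivalence between the associated Khovanov complexes, which is canonical once one verifies that every loop of diffeomorphisms based at $\idd_{\RP^3}$ induces the identity on Khovanov homology. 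This in turn reduces to a check on the finitely many generators of $\pi_1\Diff(\RP^3)$, each represented by an explicit one-parameter family in $\SO(4)$, whose mapping-torus cobordisms admit small diagrammatic models amenable to direct computation.

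The main obstacle is Stage~1. The interaction between point-push moves and the twisted $I$-bundle structure is subtle, and a minimal generating set of such moves must be chosen carefully so as to be both small enough to verify by hand and sufficient to cover every isotopy in $I\times\RP^3$; moreover the verification must be carried out uniformly with respect to both the $\Z$ and $\FF_2$ coefficient regimes required by the two homological classes of links.
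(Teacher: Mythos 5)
Your two-stage skeleton (functoriality in $I\times\RP^3$ via a sweep-around-type move, then the intrinsic statement via the generalized Smale conjecture and a check on $\pi_1(\Diff(\RP^3))$) is the same as the paper's. The genuine gap is in what you expect the verifications to yield. You assume that the finite check ``inside $\BNP$'' will simply return the identity, as in Morrison--Walker--Wedrich's $S^3$ case. For null-homologous links this is false: after reducing to crossingless diagrams, the sweep-around composite on an unknot component is a difference of dotted cobordisms whose closure produces dotted and undotted closed $\RP^2$'s in $I\times\RP^2$, and it equals the identity only after imposing relations on $\BNP[0]$ --- namely $2=0$, the undotted $\RP^2$ evaluates to $0$, and the dotted $\RP^2$ is a square root of $E_1$; the loop of diffeomorphisms coming from the deck transformation forces in addition $E_1=0$. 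In other words, the invariance does not hold in the class-0 Bar-Natan category itself but only in quotients ($\BNP[0]'$ and then $\BNP[0]''$), and these forced relations are precisely why the theorem restricts to $\FF_2$ coefficients for null-homologous links and why only certain concrete theories (Khovanov and its Lee-type deformation, but not the class-0 Bar-Natan deformation) descend to the unparametrized setting. A proof that does not compute these composites and identify the minimal quotient cannot conclude, and indeed would ``prove'' a false statement if run over the unquotiented category.

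The second gap is in Stage 2. You never identify the generators of $\pi_1(\Diff(\RP^3))\cong\ZZ\oplus\ZZ$. One generator is the $2\pi$-rotation, which behaves exactly as in $S^3$ and is easy. The other comes from the deck transformation: a full translation along a $\pi_1(\RP^3)$-generator composed with a $\pi$-rotation, whose effect on a projective braid closure $\beta$ is the flip $\beta\mapsto\beta^*$ realized by conjugation with half twists. This move has no $S^3$ counterpart, and calling its mapping torus a ``small diagrammatic model amenable to direct computation'' understates the work: for an arbitrary braid one must choose a representative chain map that is filtered with respect to the cubical grading (as in the flip-map analysis of Chen and the second author), use the Euler-characteristic-zero/homological-grading argument to see it preserves the cube, and thereby reduce to crossingless diagrams --- the same filtration device that makes the sweep-around check finite. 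Without explicitly supplying this reduction and the explicit local computations for a strand passing a circle, the flip of a circle, and the class-0 versus class-1 middle moves, the ``direct computation'' step of your plan does not close.
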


Along the way to proving Theorem \ref{thm:main thm}, we also establish the functoriality of Khovanov homology for link cobordisms in $I\times\RP^3$.

\begin{thm}\label{thm:functoriality}
    Khovanov homology in $\RP^3$, with $\FF_2$ coefficients for null-homologous links and $\Z$ coefficients for homologically essential links, is functorial (up to overall sign for homologically essential links) for link cobordisms in $I\times \RP^3$.
\end{thm}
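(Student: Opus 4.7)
My plan is to leverage the already-established functoriality of Khovanov homology for cobordisms in $I\times(\RP^3\setminus\{*\})$ (via Bar-Natan's canopolis, as noted in the introduction) and to extend it to $I\times\RP^3$ via a dimension-count argument combined with the verification of a small number of extra movie moves. Because any link cobordism $\Sigma$ is $2$-dimensional in the $4$-manifold $I\times\RP^3$ while the deleted line $I\times\{*\}$ is $1$-dimensional, a generic perturbation of $\Sigma$ is disjoint from $I\times\{*\}$ and hence lies inside $I\times(\RP^3\setminus\{*\})$. Such a perturbation assigns to $\Sigma$ a well-defined chain map (up to sign, or with $\FF_2$ coefficients in the null-homologous case) on Khovanov homology via the known functoriality. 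The remaining task is to show that different generic perturbations, which are connected by one-parameter families of cobordisms that transversely cross $I\times\{*\}$ in finitely many points, induce identical maps.

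Each transverse crossing represents a local \emph{puncture-passing move}: a small disk of $\Sigma$ is pushed across $\{*\}$. After pulling the rest of the cobordism away from the puncture, the combinatorial effect of this move on projected diagrams is controlled by the monodromies coming from $\pi_0(\Diff^+(\RP^3,\{*\}))$ and from loops based at $\{*\}$; since $\pi_1(\RP^3)=\Z/2$, one expects two new classes of relations to verify. As the macros in the paper suggest, these appear as a \emph{type I} monodromy, essentially a rotation of a local tangle box and the analogue of the Morrison--Walker--Wedrich \cite{morrison2022invariants} "through infinity" move in $S^3$, and a \emph{type II} monodromy, which conjugates a tangle $\beta$ by a half twist $\Delta$ and applies the strand-reversal $\beta\mapsto\beta^*$ coming from sliding around the nontrivial loop in $\RP^3$. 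For each move I would construct an explicit chain homotopy between the two composite Khovanov maps, working in the canopolis of $\RP^2$-tangles and reducing everything to verifications on finite tangle complexes.

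The principal obstacle will be verifying the type II monodromy, since it fundamentally uses the twisted $I$-bundle structure of $\RP^3\setminus\{*\}$ over $\RP^2$ and mixes an algebraic braid manipulation with a geometric reflection; in particular, it is not directly decomposable into standard Carter--Saito movie moves. My approach would be to realize the monodromy by an explicit composition of elementary canopolis moves within $I\times(\RP^3\setminus\{*\})$, showing that the resulting composite chain map is chain homotopic to the identity via the known behavior of the Khovanov complex of $\Delta$ under conjugation. Sign tracking will be essential here: for homologically essential links only an overall sign is permitted, so the signs contributed by the constituent elementary cobordisms must conspire to a single global sign rather than vary across the complex, and this coherence has to be proved directly. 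Once these monodromy moves are verified, functoriality in $I\times\RP^3$ follows formally by combining with the Carter--Saito movie-move theorem applied inside $I\times(\RP^3\setminus\{*\})$.
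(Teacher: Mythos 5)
Your opening reduction is the right one and matches the paper's: a generic cobordism misses $I\times\{\infty\}$, and a generic isotopy between two such cobordisms meets $I\times\{\infty\}$ at finitely many instants, so the theorem reduces to checking invariance across each such puncture-passing event. From that point on, however, you verify the wrong list of moves. When the isotopy crosses $I\times\{\infty\}$, the effect on diagrams is a single new move, the \emph{sweep-around move}: a strand of the link is dragged once around the boundary $2$-sphere of $I\tilde\times\RP^2$, returning with a local flip because of the twisted $I$-bundle structure. The ``type I'' and ``type II'' monodromies you propose to check --- the rotation of the diagram and the conjugation of $\beta$ by a half twist $\Delta$ with $\beta\mapsto\beta^*$ --- are the generators of $\pi_1(\operatorname{Diff}^+(\RP^3))\cong\Z/2\oplus\Z/2$, and they belong to the proof of the \emph{intrinsic} (unparametrized) invariance statement, Theorem~\ref{thm:main thm}, not to functoriality in $I\times\RP^3$; neither $\pi_1(\RP^3)$ nor $\pi_0$ of a point-fixing diffeomorphism group enters Theorem~\ref{thm:functoriality}. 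So your plan both omits the move that actually needs to be verified and imports moves that are irrelevant to this theorem.

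Even with the correct move in hand, your proposed verification --- ``construct an explicit chain homotopy \dots reducing everything to verifications on finite tangle complexes'' --- cannot work as stated, because the sweep-around move is not local: the moving strand interacts with every strand of the diagram, so there is no fixed finite tangle in which the comparison takes place. The paper's proof (following Morrison--Walker--Wedrich) supplies the missing mechanism: after reducing to braid closures, one chooses representatives of the Reidemeister~3 maps that are filtered with respect to the cubical grading; since the sweep-around map preserves the homological grading, it then preserves the cubical grading, and the verification collapses to crossingless resolutions, which are handled by a few model computations (a strand passing a small unknot, the flip of an unknot, and the moves near an essential circle). Your proposal has no substitute for this step. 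Finally, in the class-0 case the sweep-around map is \emph{not} homotopic to $\pm\operatorname{id}$ over the integral Bar-Natan category of $\RP^2$: one must impose $2=0$ and fix the evaluations of closed projective planes (undotted $\RP^2$ evaluates to $0$, dotted $\RP^2$ to a square root of $E_1$), and this is precisely where the $\FF_2$ restriction for null-homologous links comes from. A proof that simply asserts a chain homotopy exists, without confronting these evaluations, cannot close the class-0 case.
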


In fact, we will prove analogues of Theorem~\ref{thm:main thm} and Theorem~\ref{thm:functoriality} at the level of appropriate Bar-Natan categories, which recover the theorems stated as special cases. The more precise and general theorem statements can be found in Theorem~\ref{thm:unpara_Kh_1} and Theorem~\ref{thm:unpara_Kh_0}, Theorem~\ref{thm:functoriality_1} and Theorem~\ref{thm:functoriality_0}, respectively. Each general statement is split into two theorems because link homology theories in $\RP^3$ exhibit different behaviors for null-homologous links and homologically essential links, which require different treatments at the level of Bar-Natan categories and can recover different link homology theories in the literature; see Remark~\ref{rem:concrete_unpara} and Remark~\ref{rem:concrete_functorality}.

\begin{rem}
    An integral lift of Khovanov homology for null-homologous links in $\RP^3$ is obtained by Gabrov\v sek \cite{Gabrovek2013THECO} and Manolescu--Willis \cite{manolescu2025rasmussen}. We are currently not able to recover the functoriality of Khovanov homology over $\Z$ for link cobordisms between null-homologous links.
\end{rem}

The proof of Theorem \ref{thm:functoriality} follows a similar strategy to \cite{morrison2022invariants}. The only additional movie move one needs to check is a \textit{sweep-around move} on $\RP^2$; see Figures~\ref{fig:sweeparound} and \ref{fig:sweeparound_diagram}. 
The proof of Theorem \ref{thm:main thm}, on the other hand, is more difficult than the $S^3$ case since $\RP^3$ has a more complicated diffeomorphism group. The generalized Smale Conjecture \cite{bamler2019ricci} implies that the oriented diffeomorphism group of $\RP^3$ has the homotopy type of $\operatorname{SO}(4)/\{\pm1\}$. In particular, we have \[\pi_0(\operatorname{Diff}^+(\RP^3))=1,\quad\pi_1(\operatorname{Diff}^+(\RP^3))\cong\Z/2\oplus\Z/2.\]
There are then two types of movies to check to upgrade Khovanov homology to an invariant of links in unparametrized $\RP^3$'s. The one coming from $\pi_1(\operatorname{SO}(4))$ is essentially the same as in the $S^3$ case. The other one, which comes from the deck transformation, is new and related to the flip map in $S^3$ studied in \cite{chen2025flip} by Chen and the second author. 

The key idea behind Theorems \ref{thm:main thm} and \ref{thm:functoriality} is a filtration argument that reduces computations to crossingless diagrams. Roughly speaking, if a link cobordism from a link diagram $D$ to itself has Euler characteristic zero and has a nice movie representation, it is often feasible to perturb it such that the induced map on Khovanov homology preserves the vertices in the cube of resolutions of $D$. Similar arguments have appeared in \cites{morrison2022invariants,chen2025flip}. 

\subsection*{Organization of the paper}In Section \ref{sec:pre}, we provide necessary background on knot theory in $\RP^3$ and Khovanov homology in $\RP^3$. We then state the explicit forms and prove Theorem \ref{thm:functoriality} in Section \ref{sec:sw} and Theorem \ref{thm:main thm} in Section \ref{sec:intrinsic}.

\subsection*{Acknowledgments}This work was initiated and partially carried out during the workshop ``Categorification in Low Dimensional Topology'' at Ruhr University Bochum, and we would like to thank the organizers for providing a stimulating environment. We would also like to thank Jianfeng Lin, Ciprian Manolescu, and Paul Wedrich for helpful discussions. QR was partially supported by the Simons Investigator Award 376200. HY was partially supported by the Simons Collaboration grant on New Structures in Low-Dimensional Topology and a Simons Dissertation Fellowship.

\section{Preliminaries}\label{sec:pre}

\subsection{Links in \texorpdfstring{$\RP^3$}{RP3}}\label{sec:links_RP3}
For our purposes, it is convenient to think of $\RP^3$ as the $3$-ball $B^3$ with antipodal points on the boundary sphere identified. We fix a distinguished point $\infty\in\RP^3$ corresponding to the (identified) north and south poles in the $B^3$ model, and let $\nu(\infty)\subset\RP^3$ denote a tubular neighborhood of it. We can then view $\RP^3\backslash\nu(\infty)$ as the solid cylinder with antipodal points identified on its side, or more precisely $$\RP^3\backslash\nu(\infty)=([-1,1]\times D^2)/((t,x)\sim(-t,-x)\colon t\in[-1,1],\ x\in S^1).$$
From this model, it is also clear that $\RP^3\backslash\nu(\infty)$ is the twisted $I$-bundle over $\RP^2$, denoted by $I\tilde\times\RP^2$, with the projection to $\RP^2=D^2/(x\sim-x\colon x\in S^1)$ given by $(t,z)\mapsto z,\,t\in[-1,1],\,z\in D^2$. The boundary $2$-sphere of $I\tilde\times\RP^2$ is shown as the upper and lower boundaries of $[-1,1]\times D^2$, identified along their common boundary circle.

Any link in $\RP^3$ can be perturbed to lie in $I\tilde\times\RP^2=\RP^3\backslash\nu(\infty)$, and the resulting link type is independent of the perturbation. In general position, a link in $I\tilde\times\RP^2$ has a generic link diagram on $\RP^2$ with only double point singularities. Since $\RP^2$ is nonorientable, one cannot distinguish overstrands from understrands at a crossing unless a local orientation at the crossing is chosen. Nevertheless, the notions of positive/negative crossings and $0/1$-resolutions at a crossing are still well-defined. In practice, it is sometimes convenient to draw link diagrams on $D^2$ with antipodal points on the boundary identified.

A link $L\subset\RP^3$ is of \textit{class-0} if it is null-homologous, and of \textit{class-1} if it is homologically essential. See Figure~\ref{fig:links_RP3} for an example of a two-component class-1 link in the cylinder model of $\RP^3\backslash\nu(\infty)$ and its diagram in $\RP^2$ drawn on the disk. For convenience, in our pictures, disks are slightly distorted to squares with round corners.

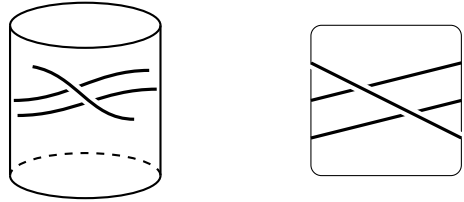
\begin{figure}[htbp]
    \centering
    \begin{tikzpicture}
    \node at (-2,0) {
    \begin{tikzpicture}[anchorbase,scale=0.5]
            \coordinate (O) at (0,0); 
    \coordinate (T) at (0,4); 
    \draw[thick] ($(O) + (-2,0)$) -- ($(T) + (-2,0)$); 
    \draw[thick] ($(O) + (2,0)$) -- ($(T) + (2,0)$);
    
    \draw[dashed, thick] ($(O) + (2,0)$) arc (0:180:2 and 0.6);
    \draw[thick] ($(O) - (2,0)$) arc (180:360:2 and 0.6);

    \draw[thick] ($(T) + (2,0)$) arc (0:180:2 and 0.6);
    \draw[thick] ($(T) - (2,0)$) arc (180:360:2 and 0.6) ;

    \draw[very thick] (-1.9,2) to[out=0,in=180] (1.7,2.8);
    \draw[very thick] (-1.8,1.6) to[out=0,in=180] (1.9,2.3);
    \draw[white,line width=0.15cm] (-1.4,2.9) to[out=-10,in=180] (1.3,1.5);
    \draw[very thick] (-1.4,2.9) to[out=-10,in=180] (1.3,1.5);
    \end{tikzpicture}
    };

    \node at (2,0) {
    \begin{tikzpicture}[anchorbase,scale=0.5]
        \draw[rounded corners] (0,0) rectangle (4,4);
        \draw[very thick] (0,1)--(4,2) (0,2)--(4,3);
        \draw[white,line width=0.15cm] (0,3)--(4,1);
        \draw[very thick] (0,3)--(4,1);
    \end{tikzpicture}
    };
\end{tikzpicture}
    \caption{A two-component class-1 link in $I\tilde\times\RP^2$ drawn in the cylinder model (left), and its diagram on $\RP^2$ drawn in the disk model (right). On the left, the ends of the strands are on the boundary of the cylinder.}
    \label{fig:links_RP3}
\end{figure}

\subsection{Khovanov homology in \texorpdfstring{$\RP^3$}{RP3}}\label{sec:Kh_RP3}
Asaeda, Przytycki, and Sikora \cite{asaeda2004categorification} define Khovanov homology for oriented links in oriented $I$-bundles over surfaces. As a special case, their construction gives Khovanov homology with $\FF_2$ coefficients for oriented links in $\RP^3$. Subsequently, sign fixes have been achieved by Gabrov\v sek \cite{Gabrovek2013THECO} and Manolescu--Willis \cite{manolescu2025rasmussen} to upgrade Khovanov homology to $\Z$ coefficients. In $\RP^3$, deformations of Khovanov homology in the spirit of Lee \cite{lee2005endomorphism} and Bar-Natan \cite{bar2005khovanov}*{Section~9.3} have also been studied \cites{manolescu2025rasmussen,chen2025bar}, and $s$-type filtration invariants in the spirit of Rasmussen \cite{rasmussen2010khovanov} have been extracted.

The construction of these Khovanov-type homologies with $\ZZ$ coefficients is best understood as a two-step process, as explained by Bar-Natan \cite{bar2005khovanov}: first, construct a link invariant in the Bar-Natan category of $\RP^2$; second, apply a TQFT-type functor from the Bar-Natan category to an abelian category to get homological invariants.

We briefly review the first step of the construction. Let $\LinkIP$ denote the category of oriented links in $I\tilde\times\RP^2$ with generic link diagrams on $\RP^2$, and oriented link cobordisms between them up to isotopy rel boundary. Let $\BNP$ denote the universal dotted Bar-Natan category of $\RP^2$, which is by definition the $\Z[E_1,E_2]$-linear $\Z$-graded category whose objects are embedded closed compact $1$-manifolds on $\RP^2$ with formal quantum $\Z$-degree shifts allowed, and whose morphisms between two objects $q^{m_1}O_1$ and $q^{m_2}O_2$ consist of $\Z[E_1,E_2]$-linear combinations of dotted cobordisms in $I\times\RP^2$ between $O_1$ and $O_2$ of degree $m_2-m_1$, up to isotopy rel boundary and Bar-Natan skein relations. Here, the degree of a dotted cobordism is its Euler characteristic minus twice the number of dots, and the degrees of $E_1,\,E_2$ are $-2,\,-4$, respectively. The Bar-Natan skein relations are given by
\begin{itemize}
\item An undotted local $2$-sphere evaluates to $0$;
\item A one-dotted local $2$-sphere evaluates to $1$;
\item Two adjacent dots can be replaced by $E_1$ times one dot minus $E_2$ times no dot, as shown in Figure~\ref{fig:double dotted};
\item The neck-cutting relation as shown in Figure~\ref{fig:neck_cutting}.
\end{itemize}
\begin{figure}[htbp]
    \centering
    \begin{equation*}
    \begin{tikzpicture}[anchorbase,scale=0.5]
        \draw[thick] (0,0)--(2,-2)--(2,-6)--(0,-4)--(0,0);
        \fill (1.2,-3.2) circle (0.1);
        \fill (0.9,-2.9) circle (0.1);
    \end{tikzpicture}
    \,=\,
    E_1\,\begin{tikzpicture}[anchorbase,scale=0.5]
        \draw[thick] (0,0)--(2,-2)--(2,-6)--(0,-4)--(0,0);
        \fill (1,-3) circle (0.1);
    \end{tikzpicture}
    \,-\,
    E_2\,\begin{tikzpicture}[anchorbase,scale=0.5]
        \draw[thick] (0,0)--(2,-2)--(2,-6)--(0,-4)--(0,0);
    \end{tikzpicture}
    \end{equation*}
    \caption{The dot relation in $\BNP$.}
    \label{fig:double dotted}
\end{figure}

\begin{figure}[htbp]
    \centering
    \begin{equation*}
    \begin{tikzpicture}[anchorbase,scale=0.6]
        \twoelp
        \draw[thick] ($(O) + (-1,0)$) -- ($(T) + (-1,0)$); 
        \draw[thick] ($(O) + (1,0)$) -- ($(T) + (1,0)$);
    \end{tikzpicture}
    \,=\,
    \begin{tikzpicture}[anchorbase,scale=0.6]
        \twoelp
        \draw[thick] ($(O) + (1,0)$) arc (0:180:1 and 1);   
        \draw[thick] ($(T) - (1,0)$) arc (180:360:1 and 1);
        \fill (0,2.3) circle (0.1);
    \end{tikzpicture}
    \,+\,
    \begin{tikzpicture}[anchorbase,scale=0.6]
        \twoelp
        \draw[thick] ($(O) + (1,0)$) arc (0:180:1 and 1);   
        \draw[thick] ($(T) - (1,0)$) arc (180:360:1 and 1);
        \fill (0,0.7) circle (0.1);
    \end{tikzpicture}
    \,-\,E_1\begin{tikzpicture}[anchorbase,scale=0.6]
        \twoelp
        \draw[thick] ($(O) + (1,0)$) arc (0:180:1 and 1);   
        \draw[thick] ($(T) - (1,0)$) arc (180:360:1 and 1);
    \end{tikzpicture}
    \end{equation*}
%    \includestandalone[width=0.7\linewidth]{pic/neck_cutting}
  % \includegraphics[width=0.5\linewidth]{pic/neck_cutting.jpg}
    \caption{The neck-cutting relation in $\BNP$.}
    \label{fig:neck_cutting}
\end{figure}

Let $\Kbpm(\BNP)$ denote the bounded homotopy category of chain complexes over the additive category $\BNP$, with each morphism identified with its negative. At the level of Bar-Natan categories, functoriality of Khovanov homology up to overall sign in $I\tilde\times\RP^2$ can be phrased as having a functor
\begin{equation}\label{eq:BN_RP2}
\lrb{-}\colon\LinkIP\to\Kbpm(\BNP).
\end{equation}
We remark that both $\LinkIP$ and $\BNP$ decompose into two subcategories depending on the homology class of the objects. Thus, \eqref{eq:BN_RP2} can be phrased separately for class-0 and class-1 links. We will refer to the functor \eqref{eq:BN_RP2} as \textit{Bar-Natan's bracket invariant}.

The construction of \eqref{eq:BN_RP2} follows from the work of Bar-Natan \cite{bar2005khovanov} with little extra effort (see \cite{bar2005khovanov}*{Section~11.6}) as we now explain. At the level of objects in $\LinkIP$, since positive/negative crossings and $0/1$-resolutions are distinguished from each other at each crossing of a link diagram, one can define and flatten a cube of resolutions to obtain a chain complex over $\BNP$, with the appropriate homological and quantum gradings. At the level of morphisms in $\LinkIP$, one decomposes a generic link cobordism into Reidemeister moves, Morse moves, and planar isotopies of link diagrams. For each elementary move, one assigns an induced chain map as in \cite{bar2005khovanov}. Note that for Reidemeister moves, a local orientation on $\RP^2$ in the region where the move takes place has to be chosen before interpreting the assignment of the chain maps. Nevertheless, one can show that the two choices give equal assignments for Reidemeister I and II moves, and chain homotopic assignments for Reidemeister III moves (see the third and fourth paragraphs in \cite{morrison2022invariants}*{Section~3.5} for an effortless proof of the last claim). Finally, to show that two isotopic generic link cobordisms define chain homotopic chain maps, note that all the movie moves one has to check take place locally, so Bar-Natan's proof in \cite{bar2002khovanov}*{Section~8} remains valid.

To get module-valued invariants, the second step of the construction is to apply a functor from $\BNP$ to an abelian category and takes cohomology. For instance, this procedure recovers the functoriality for links in $\RP^3\backslash\nu(\infty)=I\tilde\times\RP^2$ and link cobordisms between them for
\begin{enumerate}[(a)]
\item The original Khovanov homology, with $\FF_2$ coefficients for class-0 links and $\Z$ coefficients for class-1 links \cite{asaeda2004categorification}*{Theorem~5.1(2)};
\item The Bar-Natan homology for class-0 links \cite{chen2025bar};
\item The Lee-type deformations of the Khovanov homology \cite{manolescu2025rasmussen}, with $\FF_2$ coefficients for class-0 links and $\Z$ coefficients (or $\QQ$ coefficients by specialization) for class-1 links.
\end{enumerate}

In this paper, we examine the Khovanov homology in $\RP^3$ at the level of Bar-Natan categories. It is easy to recover the corresponding statements for these module-valued invariants as special cases.

\begin{exmp}
Let $V=R[X]/(X^2-E_1X+E_2)$ be the commutative graded Frobenius algebra over the graded ring $R:=\Z[E_1,E_2]$, with multiplication $m$, unit $\eta$, comultiplication \[\Delta(1)=1\otimes X+X\otimes1-E_11\otimes1,\,\Delta(X)=X\otimes X-E_21\otimes1,\]and counit \[\epsilon(1)=0,\,\epsilon(X)=1.\] Let $\overline{V}$ be a (graded) $V$-module, namely a (graded) $R$-module equipped with a (graded) $V$-action $\overline{m}\colon\overline{V}\otimes V\to\overline{V}$ and a (graded) $V$-coaction $\overline{\Delta}\colon\overline{V}\to\overline{V}\otimes V$, subject to the relation $$(\overline{m}\otimes\idd_V)\circ(\idd_{\overline{V}}\otimes\Delta)=\overline{\Delta}\circ\overline{m}=(\idd_{\overline{V}}\otimes m)\circ(\overline{\Delta}\otimes\idd_V).$$
Let $\BNP[1]$ denote the full subcategory of $\BNP$ consisting of homologically essential objects. Then, there is a functor $F_{\overline{V}}$ from $\BNP[1]$ to the category of (graded) abelian groups, given at the level of objects by sending a collection of circles to some $\overline{V}\otimes V^{\otimes m}$, where the unique essential circle contributes to the factor $\overline{V}$, and each contractible circle contributes to a factor $V$. The assignment of $F_{\overline{V}}$ on morphisms is determined by the structural maps $\eta,\,\epsilon,\,m,\,\Delta,\,\overline{m}$, and $\overline{\Delta}$. The universal version of the homology theory (c) above for class-1 links is obtained by setting \[\overline{V}\coloneqq(R[s,t]/(E_1,E_2-st))\{1,\overline{X}\}\] with structural maps given by the second row of \cite{manolescu2025rasmussen}*{Table~1}.
\end{exmp}

\begin{rem}
The Khovanov homology in $\RP^3$ for class-0 links with $\Z$ coefficients requires intricate sign fixes as done in \cites{Gabrovek2013THECO,manolescu2025rasmussen}, and it does not seem readily recoverable from \eqref{eq:BN_RP2}. An alternative approach would be to use the $\mathfrak{gl}_2$ webs and foams formalism of \cite{blanchet2010oriented}. Once local orientations are chosen at each crossing of a link diagram in $\RP^2$, the cubes of resolution procedure yields a chain complex in the bounded homotopy category of the appropriate foam category. We expect that an appropriate functor from the foam category would then yield the $\Z$-coefficient Khovanov homology. However, at the level of foam categories themselves, the choices of local orientations affect cyclic orderings of the edges around trivalent vertices, and we do not know how to remove the dependence. We leave the proper setup along these lines for future work.
\end{rem}

\section{The sweep-around move}\label{sec:sw}
Let $\LinkP$ denote the category of oriented links in $\RP^3$ that are contained in $I\tilde\times\RP^2$ with generic link diagrams on $\RP^2$, and oriented link cobordisms between them up to isotopy rel boundary. There is a forgetful functor $$\LinkIP\to\LinkP$$ that is bijective on objects and full, but not faithful. The failure of faithfulness is due to the additional \textit{sweep-around move}, which we explain in Section~\ref{sec:move}. The goal of this section is to upgrade Bar-Natan's bracket invariant \eqref{eq:BN_RP2} to a functor from $\LinkP$, proving Theorem~\ref{thm:functoriality}.

Let $\LinkP[\epsilon]$ (resp. $\LinkIP[\epsilon]$) denote the full subcategory of $\LinkP$ (resp. $\LinkIP$) consisting of class-$\epsilon$ links, and $\BNP[\epsilon]$ denote the full subcategory of $\BNP$ consisting of objects with homology class $\epsilon[\RP^1]$, $\epsilon=0,1$.

\begin{thm}\label{thm:functoriality_1}
On the subcategory of class-1 links, \eqref{eq:BN_RP2} descends to a functor $$\lrb{-}\colon\LinkP[1]\to\Kbpm(\BNP[1]).$$
\end{thm}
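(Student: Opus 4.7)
The plan is to exploit that the forgetful functor $\LinkIP[1]\to\LinkP[1]$ is bijective on objects and full, so descending $\lrb{-}$ to $\LinkP[1]$ is equivalent to checking that any relation between morphisms introduced by forgetting the parametrization is already satisfied by the existing bracket. Two link cobordisms in $I\tilde\times\RP^2$ representing the same cobordism in $I\times\RP^3$ differ by an ambient isotopy of $\RP^3$; adapting the analysis of Morrison--Walker--Wedrich \cite{morrison2022invariants} to the present setting, such relations are generated, modulo isotopies already handled within $I\tilde\times\RP^2$, by the sweep-around move through $\infty$. The task thus becomes the following: for every generic class-1 diagram $D\subset\RP^2$, the sweep-around movie $D\to D$ induces a chain map $\phi\colon\lrb{D}\to\lrb{D}$ that is chain-homotopic to $\pm\operatorname{id}$.

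Since the sweep-around is a self-isotopy of the underlying link in $\RP^3$, the cobordism is a trivial cylinder and $\phi$ has bidegree $(0,0)$. Following the strategy of \cite{morrison2022invariants} and \cite{chen2025flip}, I would realize the sweep-around by a finite sequence of Reidemeister and planar isotopy moves, and then perturb the resulting chain map so that it respects the filtration on $\lrb{D}$ induced by the cube of resolutions of $D$. The associated graded is a direct sum indexed by vertices of the cube, each summand labeled by a crossingless configuration in $\RP^2$; since $D$ is of class $1$, every such configuration is a disjoint union of trivially embedded circles together with exactly one essential circle.

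On such a crossingless configuration, the action of the sweep-around is realized by an explicit cobordism in $\BNP[1]$ that sweeps the essential circle once around $\RP^3$ through $\infty$ while carrying the trivial circles along. Using the Bar-Natan skein relations --- sphere evaluations, neck-cutting (Figure~\ref{fig:neck_cutting}), and the double-dot relation (Figure~\ref{fig:double dotted}) --- this cobordism reduces to $\pm\operatorname{id}$. Combined with filtration-preservation, this identifies $\phi$ with $\pm\operatorname{id}$ in $\Kbpm(\BNP[1])$; the sign ambiguity is quotiented out in $\Kbpm$, yielding the theorem.

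The main obstacle will be the filtration-preservation step. Realizing the sweep-around as a finite sequence of elementary moves inevitably introduces and cancels many crossings, producing contributions that \emph{a priori} do not respect the cube-of-resolutions filtration on $\lrb{D}$. Absorbing these contributions into chain homotopies --- so that the effective chain map is filtered and its action is determined by its associated graded on crossingless diagrams --- is the technical heart of the argument, analogous to the filtration arguments in \cite{morrison2022invariants}*{Section~3} and \cite{chen2025flip}.
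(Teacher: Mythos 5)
Your high-level strategy is the same as the paper's: reduce descent to the triviality of the sweep-around move, replace the induced chain map by a representative that is filtered with respect to the cube-of-resolutions grading (so that, being of homological degree $0$, it actually preserves the cubical grading), and then evaluate on the crossingless diagrams at the vertices. The filtration step you flag as the main obstacle does go through essentially as you hope, and the paper carries it out by first trading $T$ for a braid and then invoking the Morrison--Walker--Wedrich choice of filtered representatives for Reidemeister~3 maps; one small point you omit is that after knowing the map is $\pm\idd$ vertexwise one still needs the signs at different vertices to agree, which the paper gets from the nontriviality of the merge/split edge maps.

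The genuine gap is in your crossingless evaluation. You describe the vertexwise map as ``an explicit cobordism in $\BNP[1]$ that sweeps the essential circle once around $\RP^3$ through $\infty$,'' to be simplified by skein relations. But a surface passing through $\infty$ does not live in $I\times(I\tilde\times\RP^2)$ and so is not a morphism of $\BNP[1]$ at all; the vertexwise map is by definition the composite of the maps induced by the Reidemeister moves and planar isotopies realizing the sweep-around, and these are sums of dotted cobordisms (e.g.\ the unknot flip and the strand-passing moves each give a difference of two one-dotted cylinders), not a single cobordism. Showing this composite is the identity is the substantive content: besides the local unknot computations, the key piece is the portion of the movie where the moving circle crosses the boundary $\RP^1$ twice (Figure~\ref{fig:class1middlemove}), and proving that this induces the identity is not a routine skein manipulation --- the paper does it by showing the relevant object (core circle plus boundary-parallel circle in the M\"obius band) is simple in the Bar-Natan category, via an incompressible-surface argument in $I\times M$. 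That this step cannot be waved through is illustrated by the class-0 case, where the analogous composite is \emph{not} $\pm\idd$ in $\BNP[0]$ and one is forced to pass to the quotient $\BNP[0]'$ with relations on the (un)dotted $\RP^2$; so your assertion that Bar-Natan relations alone reduce the vertexwise map to $\pm\idd$ needs an actual computation in the class-1 case, which is currently missing.
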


Let $\BNP[0]'$ denote the quotient of $\BNP[0]$ with the following relations on morphisms imposed:
\begin{itemize}
\item $2=0$;
\item The undotted $\RP^2$ lying in a level of $I\times\RP^2$ evaluates to zero;
\item The one-dotted $\RP^2$ lying in a level of $I\times\RP^2$ evaluates to a square root of $E_1$.
\end{itemize}
In other words, $\BNP[0]'$ is the $\FF_2[E_1,E_2,P_\bullet]/(P_\bullet^2+E_1)$-linear category whose objects agree with $\BNP[0]$ and whose morphisms are linear combinations of cobordisms up to Bar-Natan skein relations and the following extra relations:
\begin{itemize}
\item The undotted $\RP^2$ lying in a level of $I\times\RP^2$ evaluates to zero;
\item The one-dotted $\RP^2$ lying in a level of $I\times\RP^2$ evaluates to $P_\bullet$.
\end{itemize}

\begin{thm}\label{thm:functoriality_0}
On the subcategory of class-0 links, \eqref{eq:BN_RP2} descends to a functor $$\Kh\colon\LinkP[0]\to\Kbpm(\BNP[0]').$$
\end{thm}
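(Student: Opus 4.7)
The plan is to start from the functor $\lrb{-}\colon\LinkIP[0]\to\Kbpm(\BNP[0])$ of \eqref{eq:BN_RP2}, compose it with the canonical quotient $\Kbpm(\BNP[0])\to\Kbpm(\BNP[0]')$, and then show that the resulting functor factors along the forgetful $\LinkIP[0]\to\LinkP[0]$ to produce $\Kh$. Since this forgetful is full and a bijection on objects, it suffices to check that any two generic link cobordisms in $I\times(I\tilde\times\RP^2)$ which become ambient isotopic after inclusion into $I\times\RP^3$ induce chain homotopic maps in $\Kbpm(\BNP[0]')$.

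A general position/Cerf-theoretic argument decomposes any such ambient isotopy into sub-isotopies supported entirely in $I\times(I\tilde\times\RP^2)$---already taken care of by \eqref{eq:BN_RP2}---together with finitely many local \emph{sweep-around moves} which push an arc of the diagram across the distinguished point $\infty\in\RP^3$. Hence the theorem reduces to the single claim that for every class-0 link diagram $D$, the chain endomorphism of $\lrb{D}$ produced by one sweep-around move is chain homotopic to the identity in $\Kbpm(\BNP[0]')$. This is the class-0 analogue of the verification carried out in \cite{morrison2022invariants} for $S^3$ and in Theorem~\ref{thm:functoriality_1} for class-1 links, and I expect the same broad strategy to apply.

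To establish this, I would deploy the filtration strategy outlined in the introduction and in \cites{morrison2022invariants,chen2025flip}. Because the sweep-around cobordism is topologically the identity on $D$, it has Euler characteristic zero, and one can perturb its movie representation so that the induced chain endomorphism of $\lrb{D}$ preserves the cube-of-resolutions filtration. The computation of the sweep-around endomorphism then localizes, resolution by resolution, to an evaluation in $\BNP[0]'$ of a closed surface in $I\times(I\tilde\times\RP^2)$ running from a crossingless resolution $D_v$ back to itself, and I would show this associated graded piece agrees with the identity on $D_v$.

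The main obstacle will be this final surface evaluation. As each null-homologous circle of $D_v$ is pushed through $\infty$, its trace is a closed surface which---after neck-cutting and the dot relations---decomposes into a cylinder on $D_v$ together with embedded copies of $\RP^2$ carrying various dot patterns. The hard part will be organizing this expansion carefully enough to see that every contribution either cancels in pairs over $\FF_2$ or collapses under the $\RP^2$-evaluations $\emptyset\mapsto 0$ and $\bullet\mapsto P_\bullet$ subject to $P_\bullet^2+E_1 = 0$, leaving only the identity. A complementary consistency check is to observe that the three extra relations defining $\BNP[0]'$ are precisely those forced by the sweep-around move applied to the unknot bounding a small disk near $\infty$, so no further quotient is needed, and this also explains why class-0 inherently requires $\FF_2$-coefficients once one works intrinsically in $\RP^3$.
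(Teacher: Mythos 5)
Your overall strategy matches the paper's: reduce to the sweep-around move via general position, push the moving strand off onto a split unknot, use the Euler-characteristic-zero/cubical-grading filtration trick to localize to crossingless resolutions, and then perform explicit surface evaluations in the Bar-Natan category of $\RP^2$ to see what relations must be imposed. This is exactly the paper's route (Lemmas~\ref{lem:nowbraid}, \ref{lem:nowcrossingless}, and the model computations \ref{lem:strandmoving}, \ref{lem:flipunknot}, \ref{lem:class0middlemove}, culminating in the composition \eqref{eq:class0composition} and the relations \eqref{eq:class0fundamentalequations}).

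One substantive inaccuracy: you claim that the three extra relations defining $\BNP[0]'$ are \emph{precisely} those forced by the sweep-around move on a small unknot, ``so no further quotient is needed.'' That is not what the paper finds. The actual relations forced on $R=\mathrm{End}(\emptyset)$ by capping off the computed composition are $P^2=0$, $PP_\bullet = 1\pm 1$, $P_\bullet P = 1 \mp 1$, $P_\bullet^2 = E_1$ (which do force $2=0$ since $P$ and $P_\bullet$ commute, but only force $P^2=0$ and not $P=0$). The category $\BNP[0]'$ is obtained from this minimal quotient by the \emph{further} relation $P=0$, a convenience choice the paper makes so that $R$ has no nontrivial zero divisors. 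Conflating the minimal quotient with $\BNP[0]'$ overstates the necessity of the particular evaluation $\emptyset\mapsto 0$ for the undotted $\RP^2$. You should separate the computation of the forced relations from the (slightly stronger) definition of $\BNP[0]'$.

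Two smaller points. First, the trace of the moving unknot through the sweep-around is a cobordism from $D_v$ to $D_v$, not a closed surface; the closed $\RP^2$'s appear only after applying neck-cutting to this cobordism. Second, while your filtration sketch is right in spirit, the actual work happens in explicit local lemmas: you should, as the paper does, isolate the evaluation of the class-0 ``middle move'' (Figure~\ref{fig:class0middlemove}) as a standalone computation, since that is where the $P$-labeled cap cobordisms and hence the $P$, $P_\bullet$ relations enter. Without it the ``organizing this expansion carefully enough'' step has no content.
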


\begin{rem}
The category $\BNP[0]'$ is a further quotient of the universal target, i.e., the minimal quotient of $\BNP[0]$, that makes Theorem~\ref{thm:functoriality_0} hold. See Section~\ref{sec:proof_functoriality}.
\end{rem}

\begin{rem}\label{rem:concrete_functorality}
Theorem~\ref{thm:functoriality} applies to each of the theories (a)(b)(c) in Section~\ref{sec:Kh_RP3}.
\end{rem}

\subsection{The move}\label{sec:move}
To check \eqref{eq:BN_RP2} descends to $\LinkP$ (after passing to a quotient in the target in the class-0 case), it suffices to check that if $\Sigma,\Sigma'\colon L_0\to L_1$ are two generic link cobordisms in $I\times(I\tilde\times\RP^2)$ that are isotopic rel boundary in $I\times\RP^3$, then they induce the same chain map up to sign and homotopy on Bar-Natan's bracket invariants.

By a general position argument, one can assume an isotopy rel boundary in $I\times\RP^3$ between $\Sigma$ and $\Sigma'$ intersects the segment $I\times\{\infty\}$ transversely in finitely many points at distinct time slices. Away from these time slices, one can use the functoriality in $I\tilde\times\RP^2$ to interpolate between isotopies. Therefore, it suffices to prove the invariance of the induced maps across each of these time slices where the isotopy passes through infinity. By further decomposing and isotoping, this boils down to the case when $\Sigma\colon L\to L$ is the identity cobordism and $\Sigma'\colon L\to L$ is a ``sweep-around move'' (cf. \cite{morrison2022invariants}) that drags a strand of $L$ near $S^2=\partial(I\tilde\times\RP^2)$ to sweep it once around $S^2$. 

By further splitting an unknot off the moving strand, exploiting functoriality and noting that merging an unknot to $L$ induces a split surjection on $\Kbpm(\BNP)$, we reduce to the case when the moving strand sits on a split unknotted component of $L$. This is depicted in Figure~\ref{fig:sweeparound} in the cylinder model and Figure \ref{fig:sweeparound_diagram} at the diagram level shown in the disk model. Let $D_i$ denote the diagram drawn in the $i$-th frame in Figure \ref{fig:sweeparound_diagram}, $i=1,2,\dots,9$. Let $f_i$ denote the movie moves from $D_i$ to $D_{i+1}$ ($i=1,2,\dots,8$). Then $f_i$ is a planar isotopy on $\RP^2$ for $i=1,3,5$, $f_8$ is the flip map on the unknot component, and the remaining $f_i$'s can be realized by Reidemeister 2 and 3 moves.

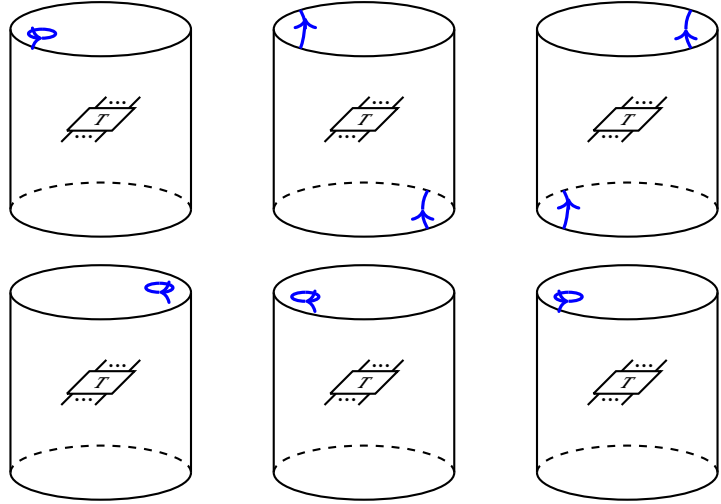
\begin{figure}[htbp]
    \centering
    \begin{tikzpicture}[anchorbase,scale=.7]
        \node at (-5,5) {\moveA{0.6}};
        \node at (0,5) {\moveB{0.6}};
        \node at (5,5) {\moveC{0.6}};
        \node at (-5,0) {\moveD{0.6}};
        \node at (0,0) {\moveE{0.6}};
        \node at (5,0) {\moveA{0.6}};
    \end{tikzpicture}
    \caption{The sweep-around move in $I\tilde\times\RP^2$ shown in the cylinder model. The first three maps isotope the unknot component near the whole boundary. The fourth map isotopes the unknot component near the upper boundary. The last map flips the unknot component locally.}
    \label{fig:sweeparound}
\end{figure}

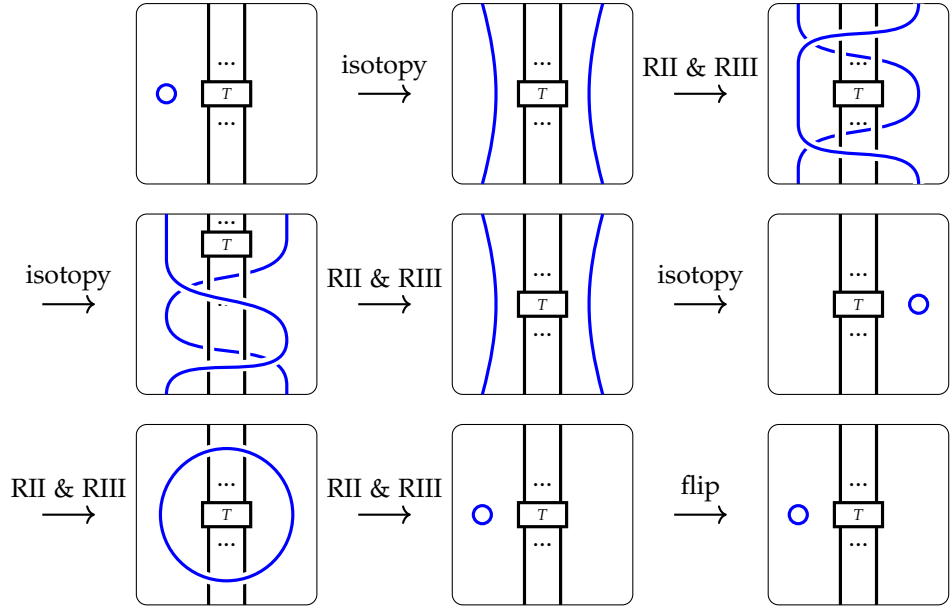
\begin{figure}[htbp]
    \centering
    \begin{tikzpicture}[anchorbase,scale=.7]

        \draw[->,thick] (-3.5,4)--(-2.5,4);
        \node at (-3,4.5) {\text{isotopy}};
        
        \draw[->,thick] (-3.5,0)--(-2.5,0);
        \node at (-3,0.5) {\text{RII \& RIII}};
        
        \draw[->,thick] (-3.5,-4)--(-2.5,-4);
        \node at (-3,-3.5) {\text{RII \& RIII}};

        \draw[->,thick] (2.5,4)--(3.5,4);
        \node at (3,4.5) {\text{RII \& RIII}};
        
        \draw[->,thick] (2.5,0)--(3.5,0);
        \node at (3,0.5) {\text{isotopy}};
        
        \draw[->,thick] (2.5,-4)--(3.5,-4);
        \node at (3,-3.5) {\text{flip}};

        \draw[->,thick] (-9.5,0)--(-8.5,0);
        \node at (-9,0.5) {\text{isotopy}};
        
        \draw[->,thick] (-9.5,-4)--(-8.5,-4);
        \node at (-9,-3.5) {\text{RII \& RIII}};

        \node at (-6,4) {\diagmovA{0.4}};
        \node at (0,4) {\diagmovB{0.4}};
        \node at (6,4) {\diagmovC{0.4}};
        \node at (-6,0) {\diagmovD{0.4}};
        \node at (0,0) {\diagmovB{0.4}};
        \node at (6,0) {\diagmovE{0.4}};
        \node at (-6,-4) {\diagmovF{0.4}};
        \node at (0,-4) {\diagmovA{0.4}};
        \node at (6,-4) {\diagmovA{0.4}};
    \end{tikzpicture}
    \caption{The sweep-around move in $I\tilde\times\RP^2$ in diagram shown in the disk model.}
    \label{fig:sweeparound_diagram}
\end{figure}

In summary, to prove Theorems \ref{thm:functoriality_1} and \ref{thm:functoriality_0}, it suffices to prove the following proposition.
\newpage
\begin{prop}\label{prop:sweeparound}
    Let $D$ be a link diagram on $\RP^2$ of a crossingless unknot and the projective closure of a $(k,k)$-tangle $T$, drawn as in the first frame of Figure \ref{fig:sweeparound_diagram}. Then the map \[\sw_{D}\colon\lrb{D}\to\lrb{D}\]induced by the movie depicted in Figure \ref{fig:sweeparound_diagram} is the identity morphism in $\Kbpm(\BNP[0]')$ or $\Kbpm(\BNP[1])$.
\end{prop}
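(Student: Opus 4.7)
The plan is to prove Proposition~\ref{prop:sweeparound} by a vertex-by-vertex argument along the cube of resolutions of $T$, in the spirit of \cite{morrison2022invariants}. The key structural feature of the movie in Figure~\ref{fig:sweeparound_diagram} is that each elementary move is supported in the complement of $T$, so the crossings of $T$ persist unchanged through the entire movie. Projecting from the cube of resolutions of each intermediate complex $\lrb{D_i}$ onto the cube of resolutions of $T$'s crossings yields, for each vertex $v$ of the latter, a well-defined diagonal component $\Sigma_v\colon D_v\to D_v$ of $\sw_D$ in $\BNP$, where $D_v=C\sqcup T_v$ is the corresponding crossingless diagram.

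A grading check first confirms that $\sw_D$ is a chain endomorphism of bidegree $(0,0)$: each of planar isotopy, Reidemeister~II, Reidemeister~III, and the local flip of the unknot induces such a chain map. Because $T$ is stationary throughout the movie, each $\Sigma_v$ splits as the disjoint union of the identity cylinder on $T_v$ and a cobordism $\Sigma_C\colon C\to C$ traced out by the sweep of the unknot, supported in a neighborhood of $C$ disjoint from $T_v$. The identification of $\Sigma_C$ is the main computational content of the proof.

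Topologically, $\Sigma_C$ is the identity cylinder $I\times C$ together with an extra closed $2$-dimensional component that records the excursion of $C$ across the removed point $\infty\in\RP^3$, modified locally by the flip at the end. In the class-$1$ case this extra component can be arranged to evaluate to zero in $\BNP[1]$, so $\Sigma_C=\idd_C$. In the class-$0$ case the extra component is an embedded $\RP^2\subset I\times(I\tilde\times\RP^2)$ carrying a single dot coming from the flip, and the relations defining $\BNP[0]'$---undotted $\RP^2=0$, one-dotted $\RP^2=P_\bullet$ with $P_\bullet^2=E_1$---are precisely what is needed to conclude $\Sigma_C=\idd_C$ in $\BNP[0]'$. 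In both cases $\Sigma_v=\idd_{D_v}$ for every $v$.

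Finally, a standard inductive argument along the cube filtration promotes the vertex-wise identity to the desired homotopy $\sw_D\simeq\idd$ in $\Kbpm(\BNP)$: the off-diagonal components of $\sw_D$ between distinct vertices of the same homological degree are absorbed into a null-homotopy built inductively using the Khovanov differential. The main anticipated obstacle is the third step, namely the explicit topological identification of $\Sigma_C$ together with the careful accounting of the flip's contribution; this analysis both drives the choice of relations in $\BNP[0]'$ and confirms that no further quotient is required.
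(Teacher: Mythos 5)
Your high-level plan (reduce to the vertices of the cube of resolutions, do a model computation for the unknot, and let the class-0 computation dictate the quotient relations) is the same strategy as the paper's, but two of your steps have genuine gaps. The first is the filtration step. Your premise that "each elementary move is supported in the complement of $T$" is not enough: the Reidemeister~III moves in the sweep each involve a crossing of $T$, so their induced chain maps do not automatically respect the cube of resolutions of $T$'s crossings, and the composite $\sw_D$ can a priori have off-diagonal components between distinct vertices of the same homological degree. Your proposal to "absorb" these into an inductively built null-homotopy is exactly the step that would fail as stated: such components of a degree-$0$ chain map are not in general removable by a homotopy, and you give no construction. The paper avoids them altogether: after first reducing $T$ to a braid by functoriality, it chooses, for each RIII move, a representative from the distinguished one-parameter family of chain homotopy equivalences (cf.\ \cite{morrison2022invariants}*{Section~3.5}, \cite{elias2010rouquier}) that is \emph{filtered} with respect to the resolution of the preferred crossing of $\beta$; a filtered map preserving the homological grading is then forced to be strictly diagonal, and its diagonal entries are the natural sweep maps on the resolutions. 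You need this (or some substitute) before any vertex-wise computation is meaningful; a sign-consistency argument across adjacent vertices (via a nontrivial merge/split cobordism) is also needed to pass from $\pm\idd$ at each vertex to $\pm\idd$ globally.

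The second gap is the identification of the diagonal pieces. The sweep cobordism on a resolution is not "an identity cylinder on $T_v$ disjoint from a closed component near $C$": the moving circle interacts with every strand of $T_v$, and in the class-1 case with the essential strand through the wall of $\RP^2$. In the paper the composite endomorphism of the unknot is the square of the "dot at one end minus dot at the other end" map (one factor from passing a strand, one from the flip), which equals $\idd$ by the Frobenius relations---not because a closed piece "evaluates to zero"---and the passage of the circle across the essential strand requires a separate argument (the relevant object of the Bar-Natan category of the M\"obius band is simple, proved by an incompressible-surface argument). In the class-0 case your description is inconsistent with your conclusion: a cylinder disjoint from a one-dotted $\RP^2$ would give $P_\bullet\cdot\idd\neq\idd$. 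The actual composite is a product of two difference-type maps, and evaluating it to $\pm\idd$ uses delooping together with \emph{all} of $P^2=0$, $PP_\bullet=1\pm1$, $P_\bullet P=1\mp1$, $P_\bullet^2=E_1$; in particular $2=0$ is forced, which your list of relations omits, and it is precisely this computation (not an a priori claim) that shows no further quotient is needed.
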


\begin{rem}
In the case of $S^3$, after detaching a local unknot from the moving strand, one can immediately deduce the triviality of the sweep-around move from Gujral--Levine \cite{gujral2022khovanov}*{Theorem~1.2} that the induced map of disjoint links cobordisms between split links is independent of the linking between components. Nevertheless, the proof presented in \cite{morrison2022invariants} provides more information and applies in the more general setup of Khovanov--Rozansky $\mathfrak{gl}_N$ homology.
\end{rem}

\subsection{Reduction to crossingless diagrams}

We argue that we can reduce the proof of Proposition~\ref{prop:sweeparound} to the case that $T$ has no crossings. For notational ease, we first reduce to the case when $T$ is a braid.

\begin{lem}\label{lem:nowbraid}
    It suffices to prove Proposition~\ref{prop:sweeparound} in the case that $T$ is a braid.
\end{lem}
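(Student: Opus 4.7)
The plan is to use a diagrammatic Alexander-type braidification to convert the general tangle $T$ into a braid, and then appeal to the $I\tilde\times\RP^2$-functoriality of Bar-Natan's bracket (already established) to transport the sweep-around map to the braid case. First, any closed components of $T$ can be split off as additional crossingless unknots, reducing to the case where $T$ consists solely of arcs meeting the top and bottom of the tangle box. Then, an Alexander-type algorithm---an adaptation of Vogel's or Yamada's algorithm to the disk-with-antipodal-boundary-identification model of $\RP^2$---transforms $T$ into a $(k',k')$-braid $B$ (with possibly $k'\ge k$ after Markov stabilizations), using only Reidemeister II/III moves and planar isotopies supported inside the tangle box.

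Let $D' = U\sqcup\pi(B)$ denote the resulting diagram and $\phi\colon\lrb{D}\to\lrb{D'}$ the induced homotopy equivalence in $\Kbpm(\BNP)$. Since the braidification is supported inside the tangle box---disjoint from the annular region near $\partial\RP^2$ where the sweep-around movie lives, and disjoint from the unknot component $U$---the two composite link cobordisms ``braidify, then sweep around in $D'$'' and ``sweep around in $D$, then braidify'' are manifestly isotopic rel boundary as cobordisms in $I\times(I\tilde\times\RP^2)$. Functoriality then gives $\phi\circ\sw_D \simeq \sw_{D'}\circ\phi$ in $\Kbpm(\BNP)$, so $\sw_D$ is the identity if and only if $\sw_{D'}$ is. Assuming the braid case of Proposition~\ref{prop:sweeparound} gives $\sw_{D'}=\idd$, and hence $\sw_D=\idd$.

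The main obstacle is executing the braidification strictly within the category $\LinkIP$: classical Alexander-type theorems use ambient isotopies in $\RR^3$ or $S^3$, whereas here we must work only with diagrammatic moves on $\RP^2$ and cannot appeal to any isotopy that passes through the distinguished point $\infty\in\RP^3$ (invariance under such isotopies is exactly what Theorem~\ref{thm:functoriality} is trying to prove). The intended resolution is to implement Vogel's algorithm directly on the disk model---choosing the braid axis to be a suitably generic interior point of the disk and using only Reidemeister II moves for stabilization---so that every move in the procedure is realizable as a morphism in $\LinkIP$. As a fallback, one may instead lift the diagram to the double cover $S^2$, braidify equivariantly in the classical setting, and descend.
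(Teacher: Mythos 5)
Your core strategy agrees with the paper's: replace $T$ by a braid $\beta$ whose projective closure is isotopic in $I\tilde\times\RP^2$ to that of $T$, pick an equivalence $\varphi\colon\lrb{D}\to\lrb{D'}$ realized by Reidemeister moves, and transport $\sw_D$ to $\sw_{D'}$ via the commutative square supplied by $I\tilde\times\RP^2$-functoriality. However, several of your auxiliary steps are either unnecessary or not quite right. The ``main obstacle'' paragraph is addressing a non-issue: the paper never needs to run a specific diagrammatic braidification algorithm. One merely asserts the existence of $\beta$ (an Alexander-type fact), and since the two links are isotopic in $I\tilde\times\RP^2 = \RP^3\setminus\nu(\infty)$, the Reidemeister theorem for links in $I$-bundles over surfaces already provides a sequence of diagrammatic moves on $\RP^2$; such an isotopy automatically avoids $\infty$. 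Your preliminary step of ``splitting off closed components of $T$ as crossingless unknots'' is also unnecessary and in fact would fail when those components are knotted; a braid $\beta$ simply absorbs closed components into additional strands. Finally, ``the annular region near $\partial\RP^2$'' is not a meaningful description of where the sweep-around cobordism lives ($\RP^2$ is closed, and at the diagram level the moving circle sweeps across the entire disk). The disjointness that makes $\varphi$ intertwine $\sw_D$ and $\sw_{D'}$ is in the fiber direction of $I\tilde\times\RP^2$: the moving unknot can be kept in a collar of $\partial(I\tilde\times\RP^2)\cong S^2$ while $T$ and its isotopy to $\beta$ stay in the interior. With these corrections your argument coincides with the paper's.
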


\begin{proof}
     Let $T$ be an arbitrary $(k,k)$-tangle. Choose a braid $\beta$ such that the projective closures of $T$ and $\beta$ represent isotopic links in $I\tilde\times\RP^2$. Let $D$ and $D'$ be the diagrams of $T\cup U$ and $\beta\cup U$ respectively. Fix an isomorphism $\varphi\colon\lrb{D}\to\lrb{D'}$ induced by a sequence of Reidemeister moves. By functoriality of Khovanov homology in $I\tilde\times\RP^2$, the following diagram commutes:\[
    \begin{tikzcd}[row sep = 10mm, column sep =  10mm]
        \lrb{D} \ar[r,"\sw_D"] \ar[d,"\varphi"] &  \lrb{D} \ar[d,"\varphi"]\\
        \lrb{D'} \ar[r,"\sw_{D'}"] & \lrb{D'}
    \end{tikzcd}.
    \]Therefore it suffices to prove that $\sw_\beta=\idd$ in $\Kbpm(\BNP[0]')$ or $\Kbpm(\BNP[1])$ for all braids $\beta$.
\end{proof}

Now assume $T=\beta$ is a braid with $l$ strands and $N$ crossings. The formal complex $\lrb{D}$ carries a $\{0,1\}^N$-grading coming from resolving the crossings either by the $0$-resolution or by the $1$-resolution, which we call the \textit{cubical grading}. The homological grading on $\lrb{D}$ is the sum of the cubical grading shifted by a constant. The map $\sw_D\colon\lrb{D}\to\lrb{D}$ preserves the homological grading since the sweep-around cobordism has Euler characteristic $0$.

Each of $D_i$'s carries the same cubical grading from resolving the crossings in $\beta$, $i=1,2,\dots,9$. We claim that one can choose maps $\sw_i\colon\lrb{D_i}\to\lrb{D_{i+1}}$ induced by the movie moves $f_i$, $i=1,2,\dots,8$, such that each of them is filtered with respect to the cubical grading, and the composition of them is a representative of $\sw_D$. 

For $i=1,3,5$, let $\sw_i$ be the maps given by planar isotopies on $\RP^2$, and they trivially preserve the cubical grading. Let $\sw_4$ be the map induced by any sequence of Reidemeister moves happening away from $\beta$, and $\sw_8$ be the map induced by two Reidemeister 1 moves on the unknot component. They also preserve the cubical grading as they do not interact with $\beta$. 

Before choosing $\sw_i$ for $i=2,6,7$, we recall a key observation in \cite{morrison2022invariants}*{Section~3.5}. For $T_1$, $T_2$ the (local) source and target of a Reidemeister 3 move, there is a distinguished $1$-dimensional affine family of chain homotopy equivalences between $\lrb{T_1}$ and $\lrb{T_2}$, all chain homotopy equivalent to each other, that supplies the induced map for the Reidemeister 3 move \cite{elias2010rouquier}. By carefully choosing the parameter, one can arrange the chain map from $\lrb{T_1}$ to $\lrb{T_2}$ to be filtered with respect to the $\{0,1\}$-grading from resolving one preferred crossing. Moreover, the filter pieces of this map are induced by the natural sequence of Reidemeister moves.

In our case, we always pick the preferred crossings to be the ones in $\beta$. This is enough to choose $\sw_i$ for our purposes. For instance, $\sw_2$ can be realized by first performing $l$ Reidemeister 2 moves and $N$ Reidemeister 3 moves that drag the right blue component to the left (over $\beta$), then performing a Reidemeister 2 move between the two blue strands, finally performing $l$ Reidemeister 2 moves and $N$ Reidemeister 3 moves that drag the left blue component to the right (under $\beta$). All the Reidemeister 2 moves involved here do not affect the cubical grading, and by carefully choosing the maps realizing the Reidemeister 3 moves, we can ensure that $\sw_2$ is filtered with respect to the cubical grading. Similar arguments work for $\sw_6$ and $\sw_7$.

The composition of the $\sw_i$'s represents $\sw_D$. Before passing to homotopy, this specific representative is filtered with respect to the cubical grading and preserves the homological grading. Therefore, it must \textit{preserve} the cubical grading. In other words, it maps each vertex in the cube of resolutions to the corresponding vertex. This justifies the following lemma.

\begin{lem}\label{lem:nowcrossingless}
    It suffices to prove Proposition~\ref{prop:sweeparound} in the case that $T$ is a crossingless diagram.
\end{lem}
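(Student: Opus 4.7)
The plan is to exploit the specific chain-level representative of $\sw_D$ constructed just before the lemma statement. That representative, namely the composition $\sw_8\circ\cdots\circ\sw_1$, is filtered with respect to the cubical grading on $\lrb{D}$ and preserves the homological grading (the sweep-around cobordism has Euler characteristic zero). Since the homological grading equals the cubical grading plus a fixed constant, any map that is filtered with respect to the cubical grading and preserves the homological grading must in fact \emph{strictly preserve} the cubical grading. Consequently, this representative decomposes as a direct sum of chain maps $(\sw_D)_v\colon \lrb{D_v}\to\lrb{D_v}$ indexed by the vertices $v\in\{0,1\}^N$ of the resolution cube, where $D_v$ denotes the crossingless diagram obtained from $D$ by replacing the braid $\beta$ with its $v$-resolution $T_v$.

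Next, I would identify each $(\sw_D)_v$ with a chain-level representative of the sweep-around map $\sw_{D_v}$ for the crossingless diagram $D_v$. This amounts to inspecting the building blocks $\sw_1,\dots,\sw_8$ under the substitution $\beta\leadsto T_v$: the planar isotopies $\sw_1,\sw_3,\sw_5$, the off-braid isotopy $\sw_4$, and the flip $\sw_8$ descend transparently; the Reidemeister 2 and 3 moves realizing $\sw_2,\sw_6,\sw_7$ only interact with $\beta$ by dragging the moving unknot strand past the strands of $\beta$, and can be performed verbatim after replacing each crossing of $\beta$ by its $v$-resolution. Granting Proposition~\ref{prop:sweeparound} in the crossingless case, each $(\sw_D)_v$ is then the identity morphism in $\Kbpm$; assembling these over all vertices, $\sw_D$ is the identity chain map on $\lrb{D}$ and hence the identity morphism in $\Kbpm(\BNP[0]')$ or $\Kbpm(\BNP[1])$.

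The step I expect to be most delicate is handling the overall sign in the class-1 setting, where morphisms in $\Kbpm(\BNP[1])$ are identified with their negatives. A priori the crossingless case only guarantees $(\sw_D)_v=\pm\idd$, with the sign possibly depending on $v$. However, the chain map condition $d\circ\sw_D=\sw_D\circ d$, applied to any cube edge whose associated saddle cobordism is nonzero in $\BNP[1]$, forces the signs at the two endpoints of that edge to agree; since the cube of resolutions is connected through such nonzero edges, the sign must be globally constant, giving $\sw_D=\pm\idd$ as a chain map and thus $\sw_D=\idd$ in $\Kbpm$. The class-0 case is automatic since $\BNP[0]'$ is $\FF_2$-linear.
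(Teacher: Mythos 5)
Your proposal is correct and follows essentially the same route as the paper's proof: exploit the cubical-grading-preserving representative constructed before the lemma, observe that at each vertex $v$ it restricts to a representative of $\sw_{D_v}$, invoke the crossingless case, and then resolve the vertex-by-vertex sign ambiguity by using the chain map condition together with the nonvanishing of the saddle cobordisms along cube edges. Both your sign argument and the paper's rely on precisely this observation, so the two proofs are in essence identical.
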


\begin{proof}
    Assume that we have proved that the sweep-around move induces the identity map in $\Kbpm(\BNP[0]')$ or $\Kbpm(\BNP[1])$ for crossingless diagrams. Let $D$ be an arbitrary diagram with $N$ crossings. As we argued above, at each cubical grading $v\in\{0,1\}^N$, $\sw_D$ is given by $\sw_{D_v}$, where $D_v$ is the resolved crossingless diagram at $v$. By assumption, $\sw_{D_v}=\pm\idd$ as chain maps, because $\lrb{D_v}$ is supported in a single homological grading. For two adjacent vertices $v$ and $v'$, the signs for $\sw_{D_v}$ and $\sw_{D_{v'}}$, as maps induced by these specific sequences of Reidemeister moves (and hence without sign ambiguity), must be the same because the merge or split cobordism between $D_v$ and $D_{v'}$ is nontrivial. Therefore, the sweep-around move for $D$ must also induce the chain map $\pm\idd$.
\end{proof}

\subsection{Model computations for crossingless diagrams}

We now state some local ingredients to compute the sweep-around map for crossingless diagrams. As usual in the Bar-Natan category, the statements are interpreted up to overall signs.

The first lemma below describes the effect of moving a strand passing over/under a crossingless unknot. The second one describes the flip map on a crossingless unknot.

\begin{figure}[htbp]
    \centering
    \begin{tikzpicture}[anchorbase,scale=.7]

    \draw[->,thick] (-2.5,0)--(-1.5,0);
    \draw[->,thick] (1.5,0)--(2.5,0);
    \node at (-2,0.5) {\text{RII}};
    \node at (2,0.5) {\text{RII}};
    
    \node at (-4,0) {
        \begin{tikzpicture}[anchorbase,scale=0.25]
            \draw[very thick] (0,0) circle (1);
            \draw[very thick] (2,-2)--(2,2);
        \end{tikzpicture}};
    \node at (0,0) {
        \begin{tikzpicture}[anchorbase,scale=0.25]
            \draw[very thick] (0,-2)--(0,2);
            \draw[white,line width=0.15cm] (0,0) circle (1);
            \draw[very thick] (0,0) circle (1);
        \end{tikzpicture}
    };
    \node at (4,0) {
        \begin{tikzpicture}[anchorbase,scale=0.25]
            \draw[very thick] (0,0) circle (1);
            \draw[very thick] (-2,-2)--(-2,2);
        \end{tikzpicture}
    };
        
    \end{tikzpicture}
    \caption{A sequence of Reidemeister moves that realizes a strand passing through a crossingless unknot.}
    \label{fig:strandmoving}
\end{figure}
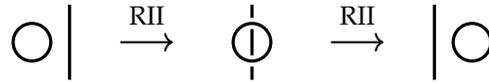

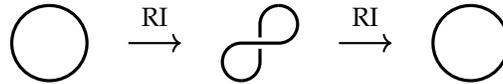
\begin{figure}[htbp]
    \centering
    \begin{tikzpicture}[anchorbase,scale=.7]

    \draw[->,thick] (-2.5,0)--(-1.5,0);
    \draw[->,thick] (1.5,0)--(2.5,0);
    \node at (-2,0.5) {\text{RI}};
    \node at (2,0.5) {\text{RI}};
    
    \node at (-4,0) {
        \begin{tikzpicture}[anchorbase,scale=0.25]
            \draw[very thick] (0,0) circle (2);
        \end{tikzpicture}};
    \node at (0,0) {
        \begin{tikzpicture}[anchorbase,scale=0.25]
            \draw[very thick]  (-1,-2) to[out=0,in=-90] (0,-1)--(0,1) to[out=90,in=180] (1,2);
            \draw[white,line width=0.15cm] (-1,-2) to[out=180,in=-90] (-2,-1) to[out=90,in=180] (-1,0)--(1,0) to[out=0,in=-90] (2,1) to[out=90,in=0] (1,2);
            \draw[very thick] (-1,-2) to[out=180,in=-90] (-2,-1) to[out=90,in=180] (-1,0)--(1,0) to[out=0,in=-90] (2,1) to[out=90,in=0] (1,2);
        \end{tikzpicture}
    };
    \node at (4,0) {
        \begin{tikzpicture}[anchorbase,scale=0.25]
            \draw[very thick] (0,0) circle (2);
        \end{tikzpicture}
    };
    \end{tikzpicture}
    \caption{A sequence of Reidemeister moves that realizes the flip cobordism for the crossingless unknot.}
    \label{fig:flipunknot}
\end{figure}

\begin{lem}\label{lem:strandmoving}
    The induced map of the movie depicted in Figure~\ref{fig:strandmoving} is given by \begin{equation*}
    \begin{tikzpicture}[anchorbase,scale=0.5]
        \draw[dashed, thick] (1,0) arc (0:180:1 and 0.4);
        \draw[thick] (-1,0) arc (180:360:1 and 0.4);
        \draw[thick] (1,0) arc (0:180:1 and 1); 
        \fill (0,0.7) circle (0.1);

        \draw[thick] (1.5,-0.4)--(2,0.5)--(2,3.4)--(1.5,2.5)--(1.5,-0.4);
        
        \draw[thick] (4.5,3) arc (0:180:1 and 0.4);
        \draw[thick] (2.5,3) arc (180:360:1 and 0.4) ;
        \draw[thick] (2.5,3) arc (180:360:1 and 1);
    \end{tikzpicture}
    \,-\,
    \begin{tikzpicture}[anchorbase,scale=0.5]
        \draw[dashed, thick] (1,0) arc (0:180:1 and 0.4);
        \draw[thick] (-1,0) arc (180:360:1 and 0.4);
        \draw[thick] (1,0) arc (0:180:1 and 1); 

        \draw[thick] (1.5,-0.4)--(2,0.5)--(2,3.4)--(1.5,2.5)--(1.5,-0.4);
        
        \draw[thick] (4.5,3) arc (0:180:1 and 0.4);
        \draw[thick] (2.5,3) arc (180:360:1 and 0.4) ;
        \draw[thick] (2.5,3) arc (180:360:1 and 1);
        \fill (3.5,2.3) circle (0.1);
    \end{tikzpicture}.
    \end{equation*}The same result holds if the strand passes under the unknot.\qed
\end{lem}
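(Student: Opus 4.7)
The plan is to compute the composite of the two Reidemeister~II chain maps directly in the Bar-Natan category, then simplify via skein relations.

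First, I would unpack the middle diagram. The second frame of Figure~\ref{fig:strandmoving} contains exactly two crossings $c_1, c_2$, namely the two intersections of the strand with the circle. This produces a $\{0,1\}^2$ cube of resolutions for $\lrb{D_2}$. Two opposite vertices of the cube (call them $v_-$ and $v_+$) yield resolutions that are isotopic to $\text{circle}\sqcup\text{strand}$; the other two vertices contain an additional small bigon circle. The R2 chain map $\phi_1\colon \lrb{D_1}\to\lrb{D_2}$ takes the form of an identification with the $v_-$-summand in homological degree zero (together with a delooping term at the adjacent bigon vertex), and dually $\phi_2\colon\lrb{D_2}\to\lrb{D_3}$ projects off of the $v_+$-summand. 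These are the formulas from Bar-Natan \cite{bar2005khovanov}, adapted to the local picture.

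Next, I would compose $\phi_2\circ\phi_1$. Because $v_-$ and $v_+$ lie on opposite corners of the cube, the composite cannot land purely on the identity part of either chain map: the nonzero contribution comes from propagating through the cube using the bigon-vertex deloopings (which convert the extra circle to a dot, according to the Bar-Natan birth/death rules). After performing this bookkeeping, the composite is a single morphism in $\BNP$ from $\text{circle}\sqcup\text{strand}$ to itself, represented by an explicit embedded $2$-manifold with dots in $I\times\mathbb{R}^2$. Topologically this cobordism is (identity on strand) disjoint union with a cap/cup pair for the circle, but the delooping inserts a dot on one of the two sheets, signed by the orientation of the cube-edge traversal.

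Then, I would simplify. The identification of the composite with the stated formula amounts to combining the two delooping insertions, which give exactly one dotted cap tensor plain cup and one plain cap tensor dotted cup, with the opposite signs dictated by the cube-edge conventions. No further skein input is required once the deloopings are applied: the neck-cutting relation of Figure~\ref{fig:neck_cutting} is already encoded in Bar-Natan's formulas for the delooping isomorphisms, and the $E_1$ correction cancels because the two saddles traversed in the cube appear with opposite orientation. The ``passes under'' case is identical: Bar-Natan's R2 chain maps, hence every step of this computation, are independent of the over/under convention at the two crossings.

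The main obstacle is the explicit bookkeeping: choosing consistent sign conventions on the cube, identifying the two diagonal vertices correctly with $D_1$ and $D_3$, and verifying that the delooping pieces combine to give exactly the difference of the two dotted configurations shown in the lemma rather than a ``sum plus $E_1$ term'' of the neck-cutting shape. Since we work in $\Kbpm(\BNP)$ where each morphism is identified with its negative, the only sign ambiguity is an overall one, which is absorbed into the statement.
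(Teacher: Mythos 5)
Your overall strategy is the same one the paper has in mind (it omits the proof as a routine calculation): compose Bar-Natan's two Reidemeister~II chain maps and read off the resulting dotted cobordism, and your final formula and the over/under remark are consistent with the statement. However, your description of the intermediate cube is factually wrong, and since your whole argument is ``bookkeeping through the cube,'' this is a step that would fail as written. In the middle frame of Figure~\ref{fig:strandmoving} every smoothing of a crossing necessarily joins an end of the strand to an end of the circle, so the four resolutions have $2,1,2,1$ components around the square: the two extreme vertices (homological degrees $\pm1$) are \emph{connected} diagrams --- a single strand that absorbs the whole circle, with no closed component at all --- while it is the two diagonally opposite degree-zero vertices that each consist of a detoured strand together with a small bigon circle (one isotopic to the first frame, the other to the last). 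There is no vertex of the form ``circle $\sqcup$ strand plus an extra bigon circle,'' which would require three components.

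Consequently the R2 chain maps do not have ``a delooping term at an adjacent bigon vertex'': each of $\phi_1,\phi_2$ is supported entirely on the two degree-zero vertices, namely an isotopy-like component at its own identity-like vertex and a correction (dotted birth/death plus saddle, equivalently the delooping of that vertex's bigon circle) at the \emph{other} degree-zero vertex, which is the identity-like vertex of the other R2 move. The composite is then the sum of the two cross terms, one through each degree-zero vertex, and simplifying these (isotoping away the capped-off split circles) yields exactly the dotted-cap-tensor-cup minus cap-tensor-dotted-cup of the lemma; as a sanity check, the resulting endomorphism squares to the identity, as used later in the proof of Theorem~\ref{thm:functoriality_1}. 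So the mechanism you describe at the end is right, but you should redo the bookkeeping on the correct cube; in particular your heuristic that ``the $E_1$ correction cancels because the two saddles appear with opposite orientation'' is not an argument --- the absence of an $E_1$-coefficient term comes out of the actual composition of the correction components, not from a sign cancellation between cube edges.
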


\begin{lem}\label{lem:flipunknot}
   The induced map of the movie depicted in Figure~\ref{fig:flipunknot} is given by \begin{equation*}
    \begin{tikzpicture}[anchorbase,scale=0.5]
        \draw[dashed, thick] (1,0) arc (0:180:1 and 0.4);
        \draw[thick] (-1,0) arc (180:360:1 and 0.4);
        \draw[thick] (1,0) arc (0:180:1 and 1); 
        \fill (0,0.7) circle (0.1);
        \draw[thick] (1,3) arc (0:180:1 and 0.4);
        \draw[thick] (-1,3) arc (180:360:1 and 0.4) ;
        \draw[thick] (-1,3) arc (180:360:1 and 1);
    \end{tikzpicture}
    \,-\,
    \begin{tikzpicture}[anchorbase,scale=0.5]
        \draw[dashed, thick] (1,0) arc (0:180:1 and 0.4);
        \draw[thick] (-1,0) arc (180:360:1 and 0.4);
        \draw[thick] (1,0) arc (0:180:1 and 1); 
        \draw[thick] (1,3) arc (0:180:1 and 0.4);
        \draw[thick] (-1,3) arc (180:360:1 and 0.4) ;
        \draw[thick] (-1,3) arc (180:360:1 and 1);
        \fill (0,2.3) circle (0.1);
    \end{tikzpicture}.\tag*{\qed}
    \end{equation*}
\end{lem}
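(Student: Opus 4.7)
The plan is to compute the induced chain map directly by composing the two Reidemeister~I chain maps prescribed by Figure~\ref{fig:flipunknot}. The entire move takes place inside a disk in $\RP^2$, so the computation is planar and the usual Bar-Natan formalism applies, with no interaction with the non-orientability of $\RP^2$. Since the source and target complexes (the crossingless unknot) are both concentrated in a single homological degree, it suffices to produce one geometric representative of the induced cobordism and to show that it agrees in $\BNP$ with the claimed disconnected expression.

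To execute this, I would first write out the standard Bar-Natan chain homotopy equivalence for a single RI kink, expressed via an explicit combination of a saddle cobordism and dotted cup/cap cobordisms (see \cite{bar2005khovanov}). The intermediate diagram in Figure~\ref{fig:flipunknot} has two RI kinks of opposite signs, giving a $2\times 2$ cube of resolutions with four vertices; composing the two RI chain maps and expanding over these vertices produces four cobordisms from the circle to itself. Each of these is then reduced using Bar-Natan's skein relations --- neck-cutting, the double-dot relation, and the sphere evaluations --- to a small linear combination of the cylinder, the dotted cylinder, and the two disconnected disk-pair cobordisms appearing in the statement. Collecting the four contributions with the signs dictated by the RI maps, the cylinder and dotted-cylinder terms cancel, leaving exactly the stated difference.

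The main obstacle is keeping track of signs and dot placements throughout: a consistent convention must be fixed for positive versus negative RI kinks, and the four vertex simplifications must be carried out carefully so that the cancellations come out as expected. As a sanity check, one can apply a TQFT functor --- for instance the original Khovanov Frobenius algebra $\Z[X]/(X^2)$, or the Lee/Bar-Natan deformations --- and compare the induced endomorphism of the unknot module with the one predicted by the right-hand side; since morphisms from the circle to the circle in $\BNP$ form a free module of rank $2$ over $\Z[E_1,E_2]$, generated by the cylinder and the dotted cylinder, a small number of such evaluations pin down the answer uniquely. An alternative, more structural approach is to observe that this flip-on-unknot computation is formally identical to the one carried out in the $S^3$ setting in \cite{chen2025flip}, and to import the result verbatim, since the local model is the same.
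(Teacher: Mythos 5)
Your plan---compose the two Reidemeister~I chain maps and reduce the resulting cobordism via the Bar-Natan skein relations---is exactly the routine local computation that the paper has in mind (the lemma is stated with $\qed$ and no separate proof, i.e.\ the authors treat it as a standard calculation), and your alternative of importing the result from \cite{chen2025flip} is likewise the intended justification. Your TQFT sanity check is sound: the degree-zero part of $\mathrm{End}_{\BNP}(U)$ for a contractible unknot $U$ is a small free $\Z$-module, so checking that the right-hand side acts by $1\mapsto 1$, $X\mapsto E_1-X$ on $V=\Z[X]/(X^2-E_1X+E_2)$ (the Frobenius-algebra conjugation, which is what the flip should do) pins the answer down.

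One detail is misread: the middle frame of Figure~\ref{fig:flipunknot} has only a \emph{single} crossing, so the intermediate complex sits over a two-vertex cube, not a $2\times 2$ cube with four vertices; there are not ``two RI kinks of opposite signs'' present simultaneously. The movie is a single RI move that inserts one curl followed by a single RI move that removes a curl, and the nontrivial content is that the curl is removed from the opposite side of the strand, which is what produces the conjugation rather than the identity. Relatedly, the ``dotted cylinder'' cannot appear in the output, since it has degree $-2$; the only degree-zero cobordisms $U\to U$ (over $\Z$) that can show up are the undotted cylinder, $E_1\cdot(\mathrm{cap})(\mathrm{cup})$, and the two one-dotted disconnected disk pairs, related by neck-cutting. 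None of this affects the validity of your approach; the computation, carried out with the correct one-crossing intermediate, gives the stated difference.
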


Note that the effects of the move on the unknot components are the same for the moves in Figure \ref{fig:strandmoving} and Figure \ref{fig:flipunknot}.

The next two ingredients are stated locally in the M\"obius band $M$, thought of as the tubular neighborhood of the segment $\{0\}\times I$ in the disk model of $\RP^2=D^2/\sim$. These can be compared with annular link homology theories, cf. \cites{grigsby2018annular,queffelec2018annular}.

\begin{figure}[htbp]
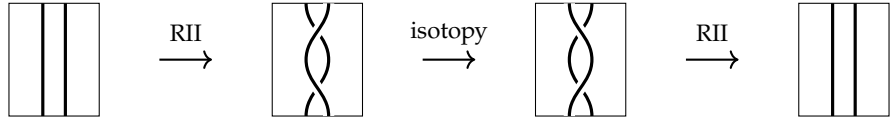

    \centering
    \classomiddlemove
    \caption{Reidemeister moves involved from the second frame to the fifth frame in Figure \ref{fig:sweeparound_diagram} when $T$ is empty. }
    \label{fig:class0middlemove}
\end{figure}

\begin{lem}\label{lem:class0middlemove}
    The induced map of the movie depicted in Figure \ref{fig:class0middlemove} is given by 
    \begin{equation*}
    \begin{tikzpicture}[anchorbase,scale=0.25]
        \draw[dashed, thick] (2,2)--(4,4)--(4,6);
        \draw[thick] (2,2)--(0,0)--(0,4)--(4,8)--(4,6);
        \draw[white, line width=0.15cm] (2,0)--(6,4)--(6,8)--(2,4)--(2,0);
        \draw[thick] (2,0)--(6,4)--(6,8)--(2,4)--(2,0);
        
    \end{tikzpicture}
    \,-\,
    \begin{tikzpicture}[anchorbase,scale=0.25]

        \draw[thick] (0,4)--(4,8) (2,4)--(6,8)
                     (0,4) arc (180:360:1 and 1)
                     (4,8) arc (180:270:1 and 1)
                     ($(5,8)+(-45:1)$) arc (315:360:1 and 1)
                     ($(1,4)+(-45:1)$)--($(5,8)+(-45:1)$);
        \draw[thick] (0,0)--(1,1) (2,0)--(6,4)
                     (2,0) arc (0:180:1 and 1) 
                     (6,4) arc (0:135:1 and 1)
                     ($(1,0)+(135:1)$)--($(5,4)+(135:1)$)
                     ; 
        %\node[font=\tiny] at (3,2.5) {\text{P}};
        %\node[font=\tiny] at (2.5,5.5) {\text{P}};
    \end{tikzpicture}.
    \end{equation*}
\end{lem}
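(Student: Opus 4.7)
The proof is a direct calculation in the Bar-Natan category. The movie in Figure~\ref{fig:class0middlemove} decomposes into an RII move, a planar isotopy in the M\"obius band, and the inverse RII move, and each step has an explicit cobordism representative following Bar-Natan's formulas \cite{bar2005khovanov}. The plan is to write these down, compose them, and simplify using the skein relations defining $\BNP$.

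The first RII sends the parallel-strand diagram into the two-term complex of the two-crossing diagram via a saddle cobordism landing in the ``oriented'' smoothing. The second (inverse) RII maps this two-term complex back to parallel strands by a reverse saddle from the oriented smoothing together with a cap/birth contribution from the ``disoriented'' smoothing. The middle isotopy acts as the identity cobordism on each smoothing, but because it drags one strand across the orientation-reversing core of the M\"obius band, the identification of the two smoothings at the start of the isotopy with those at its end acquires the M\"obius twist.

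Composing the three chain maps then yields exactly two contributions in $I\times M$, one for each resolution path through the intermediate complex. The composite saddle--identity--reverse saddle, with the M\"obius twist in the middle, produces the ``twisted double strip'' surface which is the first term on the right-hand side of the claimed formula. The composite saddle--identity--cap produces the cap-and-cup cobordism which is the second term, with the relative sign dictated by the signs in Bar-Natan's RII chain maps. Since each contribution is already an embedded surface with no closed components, no further simplification via neck-cutting or the dot relations is required, and the computation terminates.

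The main obstacle is carefully tracking how the two smoothings at the start of the middle isotopy are identified with the two smoothings at its end. This is the only step where the M\"obius topology plays an essential role; it is responsible for producing the second term and for the relative sign between the two terms, and correctly pinning it down is the heart of the argument. The remaining gluing and simplification, while requiring care to draw, is routine once this identification is fixed.
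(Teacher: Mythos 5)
Your skeleton is the same as the paper's: decompose the movie as RII, planar isotopy, inverse RII, and compute the composite through the two resolutions of the two-crossing diagram sitting in homological degree zero, with the nonorientability of the band responsible for the second summand and the sign. However, the concrete assertions you make about the composite are incorrect, and the step you explicitly defer (``how the two smoothings at the start of the middle isotopy are identified with those at its end'') is precisely the content of the lemma, so as written there is a genuine gap. First, in Bar-Natan's RII chain maps the component into (and out of) the oriented, parallel smoothing is the identity/product cobordism, not a saddle; the saddle occurs only in the component involving the disoriented smoothing, coupled with a birth (resp.\ death) of the small circle. With your components the claimed answer cannot come out: a ``saddle--identity--reverse saddle'' composite has Euler characteristic $-2$, whereas the first summand (the twisted double strip) has Euler characteristic $0$ --- in fact that summand is the trace of the isotopy on the oriented resolution sandwiched between two identity components, not a saddle--saddle composite.

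Second, and more seriously, your claim that ``each contribution is already an embedded surface with no closed components, so no simplification is required'' is exactly the point that has to be proved. The degree-zero disoriented resolution consists, after gluing the band back up, of two nullhomotopic circles; its contribution is $(\text{saddle}\sqcup\text{birth})$, then the trace of the isotopy, then $(\text{death}\sqcup\text{saddle})$. If the isotopy matched each of these circles to ``itself'', the born circle would be capped off by the death and the term would contain a closed embedded $2$-sphere, hence vanish, contradicting the stated formula. The term survives, and equals the second summand (which, glued up, is a pair of M\"obius caps on the incoming and outgoing circle, cf.\ the $P$-labeled caps in the proof of Theorem~\ref{thm:functoriality_0}), precisely because dragging around the orientation-reversing core interchanges the two circles of this resolution, so the birth and death disks cap off the saddles rather than each other. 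This identification --- which also pins down the relative sign --- is asserted in your last paragraph as ``the heart of the argument'' but never carried out, and your explicit description (``saddle--identity--cap produces the cap-and-cup cobordism'') is inconsistent with it. The paper's proof is this same computation done correctly: only the $(10)$ and $(01)$ resolutions contribute, the former giving the first summand and the latter the second (compare Grigsby--Licata--Wehrli, Figure~10); to complete your proposal you would need to redo the component bookkeeping with the correct RII maps and verify the circle-matching under the isotopy.
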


\begin{proof}
    This is also a routine calculation. Enumerate the crossings in the second frame from bottom to top. The nonzero terms appear for (10) and (01) resolutions. The former gives the first summand in the induced map, and the latter gives the second summand. Compare \cite{grigsby2018annular}*{Figure~10}.
\end{proof}

\begin{lem}\label{lem:class1middlemove}
    The induced map of the movie depicted in Figure \ref{fig:class1middlemove} is the identity.
\end{lem}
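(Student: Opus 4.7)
The approach parallels the proof of Lemma~\ref{lem:class0middlemove}: enumerate the crossings appearing in the intermediate frames of Figure~\ref{fig:class1middlemove} and compute the induced chain map by summing over the cube of resolutions. The difference is that the class-1 middle move features three strands rather than two, so the resulting cube has more vertices and more cobordism contributions to evaluate.

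Concretely, I would first label the crossings introduced by the two RII moves (two crossings each) and by the RIII move (three crossings involving all three strands). Each term in the composed chain map corresponds to a consistent choice of resolution at each crossing, and evaluates to a cobordism in $\BNP[1]$ which I would simplify via the Bar-Natan skein relations. Since the source and target diagrams are crossingless and the composed cobordism has Euler characteristic zero, the map lies in a single homological grading; after skein simplification it must reduce to a combination of identity-like cobordisms on the three parallel arcs, possibly decorated with dots.

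The key observation specific to the class-1 setting is that one of the three strands represents an essential arc, whose projective closure is an essential loop in $\RP^2$. Resolutions that would produce a closed component containing this essential loop cannot cap off contractibly, so certain contributions that arose in the class-0 computation of Lemma~\ref{lem:class0middlemove} are now constrained differently. I expect the two non-identity summands that appeared in that class-0 computation will pair up here with opposite signs and cancel, leaving only the identity contribution.

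The main obstacle will be the careful bookkeeping of signs and dot placements across the enlarged cube of resolutions. A useful organizing principle should be the reflection symmetry of the movie about its middle (isotopy) frame, which pairs RII contributions at the beginning with the RII contributions at the end and allows a cleaner comparison of opposite-sign terms. A direct analogy with the annular computation in \cite{grigsby2018annular}*{Figure~10}, cited in the proof of Lemma~\ref{lem:class0middlemove}, should make this enumeration tractable in the three-strand case, and the sweep-around arguments from the preceding subsection suggest that tracking the cubical grading of the composed map is the correct device for isolating the identity term.
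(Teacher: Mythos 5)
Your proposal is a plan for a computation, not a computation: the decisive step --- showing that every non-identity contribution in the cube of resolutions vanishes or cancels --- is only asserted (``I expect the two non-identity summands \dots will pair up here with opposite signs and cancel''), and the mechanism you predict is not backed by any argument. This matters because the analogous class-0 move (Lemma~\ref{lem:class0middlemove}) does \emph{not} induce the identity, so the identity answer in the class-1 case is exactly what needs proof; invoking the essential strand as a ``constraint'' is the right intuition but is never turned into a reason why the extra dotted/undotted terms disappear. There are also concrete obstacles your outline glosses over: the chain maps for the RIII moves are only canonical up to homotopy and a choice of parameter, so a resolution-by-resolution bookkeeping requires fixing specific representatives; the isotopy frame passes a strand through the boundary of the M\"obius band, so the computation is not a literal copy of the annular one in \cite{grigsby2018annular}; and even your crossing count for the intermediate frames is off, which suggests the bookkeeping has not actually been set up. As written, the proposal would not let a reader verify the lemma.

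For comparison, the paper sidesteps the computation entirely with a topological argument: writing the source/target as $C\cup C_\partial$ in the M\"obius band $M$ (core circle plus a boundary-parallel circle), it shows this is a \emph{simple} object of $\mathbf{BN}_M$ --- every degree-$0$ endomorphism is an integer multiple of the identity --- by classifying the essential surfaces in $I\times M$ with boundary $\partial I\times(C\cup C_\partial)$ and nonnegative Euler characteristic (only the product surface occurs, using irreducibility of $I\times M$, absence of incompressible $\RP^2$'s, tori, and Klein bottles, and the homology classes $1$ and $2$ of $C$ and $C_\partial$). Since the movie induces an invertible degree-$0$ endomorphism, it must be $\pm\idd$. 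If you want to pursue the direct route, which is viable, you must actually fix the RIII representatives, enumerate the resolutions of the intermediate frames, and show term by term that every non-identity contribution evaluates to zero under the Bar-Natan relations (e.g.\ by producing an undotted closed sphere or by a quantum-degree count), rather than appealing to an expected cancellation.
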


\begin{figure}[htbp]
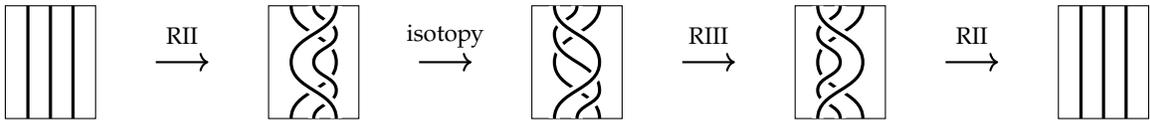

    \centering
    \classimiddlemove
    \caption{Reidemeister moves involved from the second frame to the fifth frame in Figure \ref{fig:sweeparound_diagram} when $T$ is a single strand.}
    \label{fig:class1middlemove}
\end{figure}

\begin{proof}
One can check this explicitly by working out the induced maps of all the Reidemeister moves involved. We provide a concise topological proof here. Let $C\cup C_\partial\subset M$ denote the domain and target depicted in Figure~\ref{fig:class1middlemove}, where $C\subset M$ is the central circle, and $C_\partial\subset M$ is a $\partial$-parallel circle. We claim that $C\cup C_\partial$ is a simple object in the Bar-Natan category $\mathbf{BN}_M$, meaning that the only degree $0$ endomorphisms from $C\cup C_\partial$ to itself are integral scalars of the identity morphism. Since the induced map by Figure~\ref{fig:class1middlemove} is an invertible endomorphism of $C\cup C_\partial$ of degree $0$, this would imply the lemma.

It suffices to show that the only essential surface in $I\times M$ bounding $\partial I\times(C\cup C_\partial)$ with nonnegative Euler characteristic is the product surface $I\times(C\cup C_\partial)$. Let $\Sigma$ be such an essential surface. Note that $I\times M$ is the nonorientable $3$-manifold with one $0$-handle and one $1$-handle. By cutting along the cocore disk of the $1$-handle and applying the innermost disk argument, we see that $I\times M$ is irreducible, contains no $\RP^2$, no incompressible $T^2$ or Klein bottles.
Since moreover $\{j\}\times C,\{j\}\times C_\partial$ represent homology classes $1,2\in\Z\cong H_1(I\times M)$, respectively, $j=0,1$, we see that each component of $\Sigma$ has Euler characteristic $0$ and nonempty boundary, and that there is one component $\Sigma_1$ of $\Sigma$ that cobounds $\partial I\times C$. By cutting along the cocore of the $1$-handle (chosen as $I\times\gamma$ for a properly embedded arc $\gamma$ transversely intersecting $C$ at one point) and applying the innermost disk argument again, we see that $\Sigma_1=I\times C$, and consequently $\Sigma\backslash\Sigma_1=I\times C_\partial$ by looking at the complement of $\Sigma_1$ in $I\times M$.
\end{proof}

\subsection{Completion of the proofs}\label{sec:proof_functoriality}

Diagrams survived from Lemma~\ref{lem:nowcrossingless} are all crossingless, but they can still look bizarre. Nevertheless, by a functoriality argument in the spirit of Lemma~\ref{lem:nowbraid}, one can assume the local unknot components of $T$ are all local on the nose, i.e., they do not intersect to the drawn boundary $\RP^1$ of $\RP^2$. In this case, we can then use the model computations in the last subsection to deduce the triviality of the sweep-around move. Again, we need to discuss in terms of the homology class of the link.

\begin{proof}[Proof of Theorem~\ref{thm:functoriality_1}]

By Lemma~\ref{lem:nowcrossingless}, we may assume $T$ is a crossingless diagram. By functoriality, we may further assume that one component of $T$ is the axis $x=0$, $y\in[0,1]$ in the disk model of $\RP^2$, and all other components are local unknots contained in the interior of $D^2$, each of which passes once under and once over the moving circle throughout the entire sweep-around move. 

In the case when $T$ has only one component, the sweep-around move as shown in Figure~\ref{fig:sweeparound_diagram} is decomposed into
\begin{itemize}
\item From the first frame to the second frame: a planar isotopy;
\item From the second frame to the fifth frame: the moves shown in Figure~\ref{fig:class1middlemove};
\item From the fifth frame to the sixth frame: a planar isotopy;
\item From the sixth frame to the eighth frame: the moves shown in Figure~\ref{fig:strandmoving};
\item From the eighth frame to the last frame: a flip, as shown in Figure~\ref{fig:flipunknot}.
\end{itemize}
By Lemmas~\ref{lem:strandmoving}, \ref{lem:flipunknot}, and \ref{lem:class1middlemove}, noting the relation 
\begin{equation*}
    \left(\begin{tikzpicture}[anchorbase,scale=0.5]
        \draw[dashed, thick] (1,0) arc (0:180:1 and 0.4);
        \draw[thick] (-1,0) arc (180:360:1 and 0.4);
        \draw[thick] (1,0) arc (0:180:1 and 1); 
        \fill (0,0.7) circle (0.1);
        \draw[thick] (1,3) arc (0:180:1 and 0.4);
        \draw[thick] (-1,3) arc (180:360:1 and 0.4) ;
        \draw[thick] (-1,3) arc (180:360:1 and 1);
    \end{tikzpicture}
    \,-\,
    \begin{tikzpicture}[anchorbase,scale=0.5]
        \draw[dashed, thick] (1,0) arc (0:180:1 and 0.4);
        \draw[thick] (-1,0) arc (180:360:1 and 0.4);
        \draw[thick] (1,0) arc (0:180:1 and 1); 
        \draw[thick] (1,3) arc (0:180:1 and 0.4);
        \draw[thick] (-1,3) arc (180:360:1 and 0.4) ;
        \draw[thick] (-1,3) arc (180:360:1 and 1);
        \fill (0,2.3) circle (0.1);
    \end{tikzpicture}\right)
    \circ\left(
    \begin{tikzpicture}[anchorbase,scale=0.5]
        \draw[dashed, thick] (1,0) arc (0:180:1 and 0.4);
        \draw[thick] (-1,0) arc (180:360:1 and 0.4);
        \draw[thick] (1,0) arc (0:180:1 and 1); 
        \fill (0,0.7) circle (0.1);
        \draw[thick] (1,3) arc (0:180:1 and 0.4);
        \draw[thick] (-1,3) arc (180:360:1 and 0.4) ;
        \draw[thick] (-1,3) arc (180:360:1 and 1);
    \end{tikzpicture}
    \,-\,
    \begin{tikzpicture}[anchorbase,scale=0.5]
        \draw[dashed, thick] (1,0) arc (0:180:1 and 0.4);
        \draw[thick] (-1,0) arc (180:360:1 and 0.4);
        \draw[thick] (1,0) arc (0:180:1 and 1); 
        \draw[thick] (1,3) arc (0:180:1 and 0.4);
        \draw[thick] (-1,3) arc (180:360:1 and 0.4) ;
        \draw[thick] (-1,3) arc (180:360:1 and 1);
        \fill (0,2.3) circle (0.1);
    \end{tikzpicture}\right)=\idd,
\end{equation*}
we conclude that the sweep-around move induces the identity morphism in $\Kbpm(\BNP[1])$. In general, when $T$ has $k$ local unknot components, by extra $2k$ applications of Lemma~\ref{lem:strandmoving}, we reach the same conclusion.

\end{proof}

\begin{proof}[Proof of Theorem~\ref{thm:functoriality_0}]

As in the previous case, by Lemma~\ref{lem:nowcrossingless}, we may assume that $T$ consists of local unknots. In the case when $T=\emptyset$, the whole move shown in Figure~\ref{fig:sweeparound_diagram} is decomposed into
\begin{itemize}
\item From the first frame to the second frame: a planar isotopy;
\item From the second frame to the fifth frame: the move shown in Figure~\ref{fig:class0middlemove};
\item From the fifth frame to the eighth frame: a planar isotopy;
\item From the eighth frame to the last frame: a flip, as shown in Figure~\ref{fig:flipunknot}.
\end{itemize}
By Lemmas~\ref{lem:strandmoving}, \ref{lem:flipunknot}, and \ref{lem:class0middlemove}, the composition induces the map \begin{equation}\label{eq:class0composition}
    \left(\begin{tikzpicture}[anchorbase,scale=0.5]
        \draw[dashed, thick] (1,0) arc (0:180:1 and 0.4);
        \draw[thick] (-1,0) arc (180:360:1 and 0.4);
        \draw[thick] (1,0) arc (0:180:1 and 1); 
        \fill (0,0.7) circle (0.1);
        \draw[thick] (1,3) arc (0:180:1 and 0.4);
        \draw[thick] (-1,3) arc (180:360:1 and 0.4) ;
        \draw[thick] (-1,3) arc (180:360:1 and 1);
    \end{tikzpicture}
    \,-\,
    \begin{tikzpicture}[anchorbase,scale=0.5]
        \draw[dashed, thick] (1,0) arc (0:180:1 and 0.4);
        \draw[thick] (-1,0) arc (180:360:1 and 0.4);
        \draw[thick] (1,0) arc (0:180:1 and 1); 
        \draw[thick] (1,3) arc (0:180:1 and 0.4);
        \draw[thick] (-1,3) arc (180:360:1 and 0.4) ;
        \draw[thick] (-1,3) arc (180:360:1 and 1);
        \fill (0,2.3) circle (0.1);
    \end{tikzpicture}\right)\circ\left(\begin{tikzpicture}[anchorbase,scale=0.5]
        \draw[dashed, thick] (1,0) arc (0:180:1 and 0.4);
        \draw[thick] (-1,0) arc (180:360:1 and 0.4);
        \draw[thick] (1,3) arc (0:180:1 and 0.4);
        \draw[thick] (-1,3) arc (180:360:1 and 0.4) ;
        \draw[thick] (-1,0)--(-1,3) (1,0)--(1,3);
    \end{tikzpicture}
    \,-\,
    \begin{tikzpicture}[anchorbase,scale=0.5]
        \draw[dashed, thick] (1,0) arc (0:180:1 and 0.4);
        \draw[thick] (-1,0) arc (180:360:1 and 0.4);
        \draw[thick] (1,0) arc (0:180:1 and 1); 
        \draw[thick] (1,3) arc (0:180:1 and 0.4);
        \draw[thick] (-1,3) arc (180:360:1 and 0.4) ;
        \draw[thick] (-1,3) arc (180:360:1 and 1);
        \node[font=\small] at (0,2.3) {\text{P}};
        \node[font=\small] at (0,0.7) {\text{P}};
    \end{tikzpicture}\right),
\end{equation}
where each of the $P$-labeled disk shown in the last term denotes the complement in $\RP^2$ of the disk bounded by the moving unknot at its initial position, slightly pushed into the $I$-direction, which is topologically the connected sum of a disk and $\RP^2$.

The composition \eqref{eq:class0composition}, as a morphism from the unknot $U\subset\RP^2$ to itself, is not equal to the identity morphism or its negative in $\BNP[0]$, thus we need to pass to some suitable quotient of $\BNP[0]$.

Let $R$ denote the non-commutative ring $\mathrm{End}_{\BNP[0]}(\emptyset)$ and $P,P_\bullet\in R$ denote the elements represented by the undotted, dotted $\{1/2\}\times\RP^2$ in $I\times\RP^2$, respectively. By capping off the top and bottom of the composition by undotted/dotted disks, we find that the following relations must hold in the corresponding quotient of $R$:
\begin{equation}\label{eq:class0fundamentalequations}
\begin{cases}
P^2=0,\\
PP_\bullet=1\pm1,\\
P_\bullet P=1\mp1,\\
P_\bullet^2=E_1.
\end{cases}
\end{equation}
Conversely, if \eqref{eq:class0fundamentalequations} holds, by exploiting the delooping isomorphism $U\cong q\emptyset\oplus q^{-1}\emptyset$ in $\BNP[0]$, we conclude that \eqref{eq:class0composition} is equal to $\pm1$ in the corresponding quotient of $\BNP[0]$. By some further applications of Lemma~\ref{lem:strandmoving} and reverse neck-cuttings, the same conclusion applies when $T$ is nonempty. Thus, the set of equations \eqref{eq:class0fundamentalequations} provides the minimal set of relations on $R=\mathbf{End}_{\BNP[0]}(\emptyset)$ ensuring that the sweep-around move induces the identity map in the corresponding bounded homotopy category, up to sign. The category $\BNP[0]'$ appearing in Theorem~\ref{thm:functoriality_0} is a convenient further quotient of this minimal requirement by demanding that $R$ has no nontrivial zero divisors.

\end{proof}

\section{Intrinsic invariant}\label{sec:intrinsic}
An \textit{unparametrized} (oriented) $\RP^3$ is an oriented $3$-manifold diffeomorphic to $\RP^3$. We prove the following precise versions of Theorem~\ref{thm:main thm} at the level of Bar-Natan categories, in the spirit of \cite{morrison2022invariants}*{Section~4}.

\begin{thm}\label{thm:unpara_Kh_1}
Let $M$ be an unparametrized $\RP^3$ and $L\subset M$ be a class-1 oriented link. There is a chain complex $\lrb{(M,L)}$ over $\BNP[1]$ whose chain homotopy type is an invariant of $(M,L)$, functorial up to sign under diffeomorphisms of the pair. When $M=\RP^3$, this recovers Bar-Natan's bracket invariant of $L$.
\end{thm}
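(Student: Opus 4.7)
The plan is to follow the blueprint of Morrison--Walker--Wedrich \cite{morrison2022invariants}. Given $(M, L)$, I would pick an orientation-preserving diffeomorphism $\phi \colon \RP^3 \to M$ and set $\lrb{(M, L)} := \lrb{\phi^{-1}(L)}$, where the right-hand side is Bar-Natan's bracket in $\Kbpm(\BNP[1])$ after a generic perturbation of $\phi^{-1}(L)$ into $I\tilde\times\RP^2$. For any two choices $\phi_0, \phi_1$, the composition $\phi_1^{-1} \circ \phi_0$ lies in $\Diff(\RP^3)$, and the generalized Smale conjecture \cite{bamler2019ricci} tells us this space is path-connected with $\pi_1 \cong \Z/2 \oplus \Z/2$. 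Any choice of path produces an ambient isotopy in $\RP^3$ and hence, by the class-1 functoriality in $I\times\RP^3$ (Theorem \ref{thm:functoriality_1}), a chain homotopy equivalence $\lrb{\phi_0^{-1}(L)} \to \lrb{\phi_1^{-1}(L)}$, well-defined up to sign. Independence of the chosen path (up to sign and chain homotopy) then reduces to showing that each of the two generators of $\pi_1(\Diff(\RP^3))$ induces $\pm\idd$ on $\lrb{L}$ for every class-1 link $L$. Functoriality under a diffeomorphism $\psi \colon (M, L) \to (M', L')$ will then follow by interpreting $(\phi')^{-1} \circ \psi \circ \phi$ as an isotopy in $\RP^3$, and the recovery statement for $M = \RP^3$ is immediate by choosing $\phi = \idd$.

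The first generator, pulled back from $\pi_1(\SO(4))$, is represented by a full rotation of $\RP^3$ about a fixed point. Applied to $L$, the traced self-cobordism decomposes into a sweep-around of a strand near $\partial(I\tilde\times\RP^2)$ together with a local rotation at an interior point. Triviality up to sign then follows by combining the sweep-around invariance proven in Section \ref{sec:sw} with the purely local rotation analysis of \cite{morrison2022invariants}, which does not see the ambient topology and transplants verbatim from $S^3$.

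The second generator, which is genuinely new for $\RP^3$, comes from a path in $\SO(4)$ connecting $I$ to $-I$: since $-I$ acts as the antipodal map of $S^3$ and thus as the deck transformation of $S^3 \to \RP^3$, which is trivial on $\RP^3$, this path descends to a nontrivial loop in $\SO(4)/\{\pm 1\} \subset \Diff(\RP^3)$. I would realize this loop as an explicit family of diffeomorphisms acting on a fixed diagram $D$ of $L$, unwind the resulting self-cobordism into a movie of Reidemeister and Morse moves, and then run the cubical filtration argument pioneered in the proof of Proposition \ref{prop:sweeparound}: because the cobordism has Euler characteristic zero and the induced chain map preserves the homological grading, careful choices of Reidemeister~3 maps (cf.\ \cite{elias2010rouquier}) produce a representative that is filtered, hence after passing to $\Kbpm(\BNP[1])$ preserves the cubical grading on $\lrb{D}$, reducing the check to each vertex of the cube of resolutions. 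A further functoriality reduction in the spirit of Lemma \ref{lem:nowbraid} then confines the crossingless diagram to the essential circle together with local unknots, and the model computations of Lemmas \ref{lem:strandmoving}, \ref{lem:flipunknot}, and \ref{lem:class1middlemove} finish the calculation by explicit evaluation.

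The principal obstacle will be the second generator: pinning down a concrete movie realization of the deck-transformation loop on an arbitrary diagram and verifying that the filtration machinery applies. This step is morally parallel to, but logically independent of, the sweep-around move, and its expected connection with the flip map of \cite{chen2025flip} suggests that the final crossingless calculation will unfold in a similar spirit, with the dot-shuffling of Lemma \ref{lem:strandmoving} and the flip-type evaluation of Lemma \ref{lem:flipunknot} conspiring to produce $\pm\idd$.
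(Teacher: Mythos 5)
Your overall blueprint coincides with the paper's: define $\lrb{(M,L)}$ via an $L$-admissible parametrization, use $\pi_0(\Diff^+(\RP^3))=1$ (from the generalized Smale conjecture) to connect any two parametrizations by a path, use Theorem~\ref{thm:functoriality_1} to turn the path into a chain map, and reduce path-independence to checking triviality of the two generators of $\pi_1(\Diff^+(\RP^3))\cong\Z/2\oplus\Z/2$. Your treatment of the second (deck-transformation) generator also matches: it is the flip map of \cite{chen2025flip} in disguise, handled by the cubical filtration argument of Lemma~\ref{lem:filtered_type_ii}, reduced to crossingless diagrams, and finished off with Lemmas~\ref{lem:strandmoving} and \ref{lem:flipunknot}.

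Where your sketch diverges from the paper is in the first generator. The paper organizes the two loops via the fibration $\Diffb(\RP^3)\hookrightarrow\Diff(\RP^3)\to\RP^3$: the type (i) generator is a $2\pi$-rotation about the $z$-axis of the ball model, which \emph{fixes} $\infty$ and hence lies entirely in the fiber $\Diffb(\RP^3)$. Since the basepoint $\infty$ never moves, the link stays inside $I\tilde\times\RP^2$ throughout, and the induced self-movie is just a planar $2\pi$-rotation of the diagram on $\RP^2$ (Figure~\ref{fig:2pi_rot}); after the cubical reduction to crossingless diagrams one normalizes to concentric circles around the origin and the triviality is immediate. Your claim that the self-cobordism ``decomposes into a sweep-around of a strand near $\partial(I\tilde\times\RP^2)$ together with a local rotation at an interior point'' does not describe this realization — with the paper's choice no strand ever approaches $\infty$, so no sweep-around occurs. (The sweep-around is established once in Section~\ref{sec:sw} as part of Theorem~\ref{thm:functoriality_1}, and is thereafter used silently whenever that functoriality is invoked; it is not an extra piece of the $\pi_1$ check.) Your proposed decomposition would arise only from a less convenient representative loop, and you do not justify that it actually splits that way. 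This is not a fatal gap, since the first generator is easy to handle however one normalizes it, but as written your argument for this step is off, and you should adopt the paper's axis choice to avoid the unnecessary detour.
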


Let $\BNP[0]''$ be the quotient of $\BNP[0]'$ by setting $E_1=0$.

\begin{thm}\label{thm:unpara_Kh_0}
Let $M$ be an unparametrized $\RP^3$ and $L\subset M$ be a class-0 oriented link. There is a chain complex $\lrb{(M,L)}$ over $\BNP[0]''$ whose chain homotopy type is an invariant of $(M,L)$, functorial up to sign under diffeomorphisms of the pair. When $M=\RP^3$, this recovers Bar-Natan's bracket invariant of $L$.
\end{thm}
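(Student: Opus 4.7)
The plan is to mimic the strategy of \cite{morrison2022invariants}*{Section~4}. Choose any orientation-preserving diffeomorphism $\phi\colon\RP^3\to M$, perturb $\phi^{-1}(L)$ into $I\tilde\times\RP^2$, and set $\lrb{(M,L)}:=\lrb{\phi^{-1}(L)}$ via Theorem~\ref{thm:functoriality_0} composed with the quotient functor $\Kbpm(\BNP[0]')\to\Kbpm(\BNP[0]'')$. For two such parametrizations $\phi_0,\phi_1$, since $\pi_0(\Diff(\RP^3))=1$ by the generalized Smale conjecture, they are connected by an isotopy, which produces a link cobordism between $\phi_0^{-1}(L)$ and $\phi_1^{-1}(L)$ and hence a canonical isomorphism via Theorem~\ref{thm:functoriality_0}. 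Independence of this isomorphism from the choice of isotopy amounts to checking that loops in $\Diff(\RP^3)$ act as $\pm\idd$; since $\pi_1(\Diff(\RP^3))\cong\Z/2\oplus\Z/2$, it suffices to check two generators, a \emph{rotation-type} one inherited from $\pi_1(\SO(4))$ (as in \cite{morrison2022invariants}) and a \emph{flip-type} one arising from the deck transformation of $S^3\to\RP^3$ (as in \cite{chen2025flip}).

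Both monodromies are cobordisms from a diagram $D$ to itself with Euler characteristic zero, so the filtration argument of Section~\ref{sec:sw} applies: by choosing the Reidemeister~3 chain homotopy equivalences carefully, each monodromy is represented by a chain map filtered with respect to the cube-of-resolutions grading on $\lrb{D}$, and since it preserves homological degree it must in fact preserve that grading. The verification therefore reduces vertex by vertex to crossingless diagrams. For the rotation-type generator, the relevant moves are essentially local in $S^3$ and the computation proceeds as in \cite{morrison2022invariants}*{Section~4}, producing $\pm\idd$ already in $\Kbpm(\BNP[0]')$. For the flip-type generator, one unpacks the deck-transformation loop into a concrete movie---flips on crossingless unknot components (Lemma~\ref{lem:flipunknot}) together with passages of strands through $\{\infty\}$ (Lemma~\ref{lem:strandmoving})---and composes the resulting model maps using the relations \eqref{eq:class0fundamentalequations} from the proof of Theorem~\ref{thm:functoriality_0}. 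The expected output is $\pm\idd$ plus an error term proportional to $E_1$, which is killed precisely by passing from $\BNP[0]'$ to $\BNP[0]''$; this also explains why this further quotient does not appear in the class-1 theorem.

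Functoriality under diffeomorphisms of pairs $(M,L)\to(M',L')$ is then automatic: precomposing a parametrization of the target with the given diffeomorphism yields a parametrization of the source, and the induced chain map is canonical up to sign by the independence statement, with naturality under composition inherited from Theorem~\ref{thm:functoriality_0}. Taking $\phi=\idd$ when $M=\RP^3$ recovers Bar-Natan's bracket invariant by construction. The main obstacle I expect is the flip-type vertex computation: one must faithfully model the deck-transformation loop as a movie adapted from \cite{chen2025flip} to the $\RP^3$ setting, feed it into the filtration argument, and carefully evaluate the resulting composition of dots, undotted/dotted $2$-spheres, and level-$\RP^2$ insertions in $I\times\RP^2$ against the relations of $\BNP[0]'$ in order to isolate an $E_1$-multiple error term, thereby confirming that the quotient $\BNP[0]''$ is exactly what is needed for the flip-type monodromy to act trivially.
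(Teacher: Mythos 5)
Your proposal follows essentially the same route as the paper: define $\lrb{(M,L)}$ via an admissible parametrization, use connectivity of $\Diff(M,\RP^3)$ and $\pi_1(\Diff(\RP^3))\cong\ZZ\oplus\ZZ$ to reduce well-definedness to the rotation-type and flip-type monodromies, reduce to crossingless diagrams by the cubical filtration argument (the paper's Lemma~\ref{lem:filtered_type_ii}, adapted from \cite{chen2025flip}), and observe that the flip on an unknot equals the identity only after killing an $E_1$-proportional error term, which is exactly the quotient $\BNP[0]''$. One small correction: the level-$\RP^2$ relations \eqref{eq:class0fundamentalequations} and the strand-passage Lemma~\ref{lem:strandmoving} are not needed for the class-0 flip-type monodromy (they enter only the sweep-around move and the class-1 case); the class-0 vertex computation uses just Lemma~\ref{lem:flipunknot} together with neck-cutting under $2=0$ and $E_1=0$.
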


\begin{rem}
In the case of $S^3$, \cite{morrison2022invariants} further shows that link cobordisms in unparametrized $I\times S^3$ induce well-defined chain maps up to homotopy between the invariants of the source and target links. We have not yet established a corresponding result in our case, which would require more understanding of the smooth mapping class group of $I\times\RP^3$ rel boundary up to $\pi_0(\Diff(S^4))$.
\end{rem}

\begin{rem}\label{rem:concrete_unpara}
    Theorem~\ref{thm:main thm} applies to each of the theories (a)(c) in Section~\ref{sec:Kh_RP3}.
\end{rem}

\begin{rem}
    It would also be interesting to see if Theorem~\ref{thm:functoriality_0} and Theorem~\ref{thm:unpara_Kh_0} apply (possibly with modifications) to Chen's Khovanov-type homologies for class-0 links in $\RP^3$ \cite{chen2021khovanov}, which are related to the Heegaard Floer homology of branched double covers of $\RP^3$ by a spectral sequence.
\end{rem}

Let $(M,L)$ be a pair as in Theorem~\ref{thm:unpara_Kh_1} or Theorem~\ref{thm:unpara_Kh_0}. An (orientation-preserving) parametrization $\phi\colon M\xrightarrow{\cong}\RP^3$ is \textit{$L$-admissible} if $\phi(L)$ is an object of $\LinkP$. We would like to define $\lrb{(M,L)}$ as $\lrb{\phi(L)}$ for any $L$-admissible parametrization $\phi$ of $M$. In order for this to be well-defined, we have to assign, to any two $L$-admissible parametrizations $\phi_0,\phi_1$ of $M$, a chain map $\lrb{\phi_0(L)}\to\lrb{\phi_1(L)}$ well-defined up to sign and homotopy, in a functorial way. By \cite{bamler2019ricci}, \[\Diff(M,\RP^3)=\{f\colon M\to\RP^3\colon f\text{ is an orientation-preserving diffeomorphism}\}\]has the same homotopy type as the isometry group, $\SO(4)/\{\pm1\}$, of $\RP^3$ with the elliptic metric. In particular, it is connected, and we have \[\pi_1(\Diff(\RP^3))\cong\ZZ\oplus\ZZ.\]We can then choose a path $\phi_t$, $t\in I$, of parametrizations of $M$ between $\phi_0$ and $\phi_1$. Now $\phi_t(L)$ gives an isotopy between $\phi_0(L)$ and $\phi_1(L)$, thus by Theorem~\ref{thm:functoriality_1} or Theorem~\ref{thm:functoriality_0} it induces a chain map 
\begin{equation}\label{eq:identification}
\lrb{(\phi_t(L))_{t\in I}}\colon\lrb{\phi_0(L)}\to\lrb{\phi_1(L)}
\end{equation}
well-defined up to sign and homotopy, which a priori depends only on the homotopy class of the path $(\phi_t)_{t\in I}$ in $\Diff(M,\RP^3)$ rel endpoints. Once we remove the dependence on the homotopy class of $(\phi_t)_{t\in I}$ as well, the map \eqref{eq:identification} would supply a canonical chain map between $\lrb{\phi_0(t)}$ and $\lrb{\phi_1(t)}$ up to sign and homotopy which is functorial as one can concatenate paths of isotopies.

To remove the dependence on the homotopy class of $\phi_t$, it suffices to show that a concatenation with any generator of $\pi_1(\Diff(\RP^3))$ does not change the induced map. Note that $\Diff(\RP^3)$ fits into the fibration $$\Diffb(\RP^3)\hookrightarrow\Diff(\RP^3)\to\RP^3,$$ where the second map is given by evaluating at the base point $\infty\in\RP^3$, and the fiber is the based diffeomorphism group. By taking the homotopy long exact sequence, one realizes two generators of $\pi_1(\Diff(\RP^3))$ represented by
\begin{enumerate}[(i)]
\item The $2\pi$-rotation along the $z$-axis in the ball model of $\RP^3=B^3/\sim$. The effect of this isotopy in the presence of a link is shown at the level of diagrams in Figure~\ref{fig:2pi_rot}.
\begin{figure}
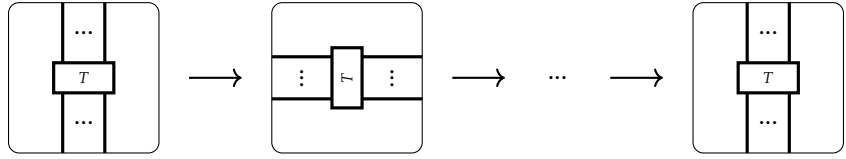

    \centering
    \typeimonodromy
    \caption{The effect of the type (i) isotopy on a link diagram.}
    \label{fig:2pi_rot}
\end{figure}
\item A full translation along the $y$-axis segment $\{0\}\times[-1,1]\times\{0\}$ in the ball model of $\RP^3$, composed with a $\pi$-rotation along the $y$-axis segment. Up to isotopy, the effect of this isotopy in the presence of a projective braid closure is shown at the level of diagrams in Figure~\ref{fig:translation_flip}.
\begin{figure}[htbp]
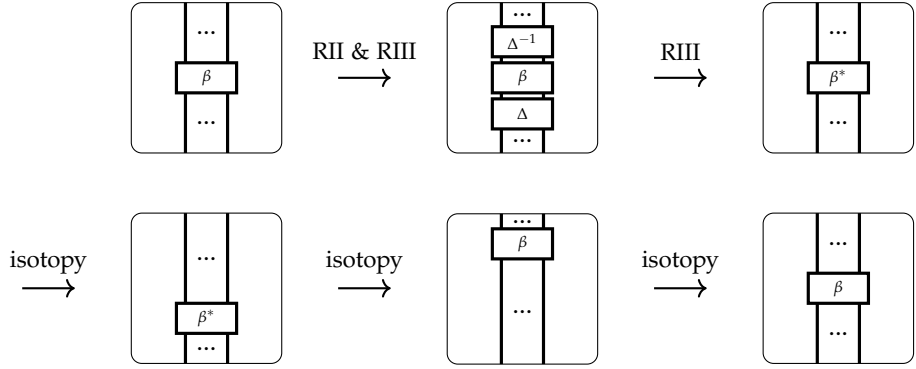

    \centering
    \typeiimonodromy
    \caption{The effect of the type (ii) isotopy on a link diagram. Here $\beta^*$ denotes the horizontal flip of the braid $\beta$, and $\Delta$ denotes a positive half twist.}
    \label{fig:translation_flip}
\end{figure}
\end{enumerate}

A type (i) isotopy isotopes the diagram of a link in $\LinkIP$ by planar rotations. We argue that it always induces the identity map in $\Kbpm(\BNP)$. It suffices to prove this for each crossingless diagram. By functoriality, we may take the crossingless diagram to be some small concentric circles around the origin in the disk model of $\RP^2$, with an additional essential circle $S^1/\sim$ in the case of class-1 link diagrams. It is then clear that a type (i) isotopy induces the identity map.

In the rest of this section, we analyze the effect on the Bar-Natan's bracket invariant of a type (ii) isotopy of a projective braid closure, as in Figure~\ref{fig:translation_flip}. To prove Theorems~\ref{thm:unpara_Kh_1} and \ref{thm:unpara_Kh_0}, it suffices to prove the following proposition in $\Kbpm(\BNP[0]')$ or $\Kbpm(\BNP[1])$.

\begin{prop}\label{prop:type_ii_idd}
    Let $L\subset \RP^3$ be a class-1 (resp. class-0) oriented link, presented as a projective braid closure $D$ of a braid $\beta$. Then the map \[\fl\colon\lrb{D}\to\lrb{D}\]induced by the movie depicted in Figure~\ref{fig:translation_flip} is the identity map in $\Kbpm(\BNP[1])$ (resp. $\Kbpm(\BNP[0]'')$).
\end{prop}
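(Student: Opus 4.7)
The plan is to adapt the filtration-based reduction developed in Section~\ref{sec:sw} for the sweep-around move. Since the flip cobordism in Figure~\ref{fig:translation_flip} has Euler characteristic zero, the induced map $\fl$ preserves the homological grading on $\lrb{D}$. The first step is to reduce to the case of a crossingless braid by a near-verbatim repetition of Lemmas~\ref{lem:nowbraid} and~\ref{lem:nowcrossingless}. We choose explicit representatives of each elementary move in Figure~\ref{fig:translation_flip} so that $\fl$ is realized by a chain map filtered with respect to the $\{0,1\}^N$-cubical grading coming from the $N$ crossings of $\beta$. The Reidemeister~III moves used to conjugate $\beta$ through the half twists $\Delta^{-1}\cdot\Delta$ (producing $\beta^*$ in the top row of Figure~\ref{fig:translation_flip}) are chosen within their distinguished one-parameter family of chain homotopy equivalences \cite{elias2010rouquier} so as to be filtered over preferred crossings of $\beta$. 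The remaining moves are Reidemeister~II moves or isotopies and flips that do not interact with the crossings of $\beta$, hence preserve cubical degree. A filtered chain map preserving total homological degree is strictly cubical-grading-preserving, so $\fl$ decomposes as a sum of vertexwise maps over the cube of resolutions, and by the argument of Lemma~\ref{lem:nowcrossingless} the signs at adjacent vertices are forced to agree via the invertible merge/split cobordisms.

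It then suffices to prove $\fl = \pm\idd$ for a single crossingless projective braid closure. In this case the top row of Figure~\ref{fig:translation_flip} reduces to an isotopy (since $\beta = \beta^*$), and by further functoriality of \eqref{eq:BN_RP2} we can isotope the diagram so that its components split into a possible essential circle $C$ lying along the axis of the disk model of $\RP^2$, together with small contractible circles lying in the interior of the disk and disjoint from $C$. The flip movie then decomposes into local movies on each component: each contractible circle undergoes a flip-on-unknot move (Lemma~\ref{lem:flipunknot}) together with a finite sequence of strand-passing moves (Lemma~\ref{lem:strandmoving}) reflecting the unknot being swept through $\infty$; and the essential circle $C$ (when present) is sent to itself by a degree-zero invertible endomorphism in $\BNP[1]$, which by a simplicity argument analogous to Lemma~\ref{lem:class1middlemove} must be $\pm\idd$. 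Assembling these local contributions as in the proof of Theorem~\ref{thm:functoriality_1} gives $\fl = \idd$ in $\Kbpm(\BNP[1])$ for class-1 links.

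For class-0 links, the same assembly produces a residue expressed in terms of dotted $\RP^2$'s lying in levels of $I\times\RP^2$, arising from the local flips on the contractible components. These terms collapse in $\BNP[0]''$: the relations $P^2=0$, $PP_\bullet = P_\bullet P = 1$, $P_\bullet^2 = E_1$ inherited from $\BNP[0]'$ together with the additional relation $E_1 = 0$ defining $\BNP[0]''$ force $P_\bullet^2 = 0$, and the combination appearing in the flip reduces to $\pm\idd$, which sign-matches across vertices by the same merge/split argument as before. The principal obstacle is the simplicity statement for the essential circle in Step~2, since the flip cobordism mixes a translation through $\infty$ with a $\pi$-rotation and is thus not directly one of the cobordisms analyzed in Lemma~\ref{lem:class1middlemove}: one must verify by a direct topological argument that any nonnegative-Euler-characteristic essential surface in the relevant piece of $I\times\RP^3$ bounding the two copies of $C$ is a product surface, refining the incompressible-surface analysis carried out in the proof of Lemma~\ref{lem:class1middlemove}.
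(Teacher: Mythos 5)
Your overall strategy (filter with respect to the cubical grading to reduce to crossingless diagrams, then compute with the local Lemmas~\ref{lem:strandmoving} and \ref{lem:flipunknot}) is the same as the paper's, which establishes the filtered representative via \cite{chen2025flip} in Lemma~\ref{lem:filtered_type_ii}; for that step you should also make explicit that the vertexwise maps are again the type~(ii) movies performed on the resolutions $D_v$, since otherwise the reduction to crossingless diagrams says nothing. The genuine problems are in your crossingless computations. For class-1 links, the ``principal obstacle'' you flag does not exist: after arranging the essential component along the $y$-axis circle of the disk model, the type~(ii) isotopy preserves it setwise throughout (it is the translation/rotation axis), so its diagram never changes; the only nontrivial local movies are a flip of each local unknot followed by a single passage across the axis (Figure~\ref{fig:class1typeiimove}), and Lemmas~\ref{lem:flipunknot} and \ref{lem:strandmoving} already pin the induced map down as the square of the dot-difference endomorphism of the unknot, which is the identity. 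No simplicity statement for the essential circle alone is needed, and in any case such morphisms live in $I\times\RP^2$ (or $I\times M$), not $I\times\RP^3$; by leaving this auxiliary incompressibility claim unproved you have left a hole exactly where the actual argument requires none.

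Your class-0 paragraph is incorrect as written. No $\RP^2$-caps arise in the type~(ii) computation: a class-0 crossingless diagram consists of local unknots, each of which is flipped and then returned by a planar isotopy, so the induced map on each component is just the dot-difference morphism of Lemma~\ref{lem:flipunknot}, with no $P$ or $P_\bullet$ terms (those appear only in the sweep-around computation via Lemma~\ref{lem:class0middlemove}). Moreover the relations you quote are not those of $\BNP[0]'$: there $P=0$ and $2=0$, so $PP_\bullet=P_\bullet P=0$, and ``$PP_\bullet=P_\bullet P=1$'' is inconsistent with that category. The correct and needed point is elementary: by the neck-cutting relation, the dot-difference endomorphism of an unknot equals the identity plus $2$ times a dotted cap-cup term minus $E_1$ times an undotted one, so it becomes the identity precisely when $2=0$ and $E_1=0$, i.e.\ in $\BNP[0]''$ (this is also why the statement requires the further quotient $\BNP[0]''$ rather than $\BNP[0]'$). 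With these two repairs, your argument matches the paper's proof.
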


The movie depicted in Figure~\ref{fig:translation_flip} is essentially the flip cobordism studied in \cite{chen2025flip}. To write it down clearly in terms of Reidemeister moves, we proceed as in \cite{chen2025flip}*{Proposition~4.4}. Namely, we introduce pairs of half twists (by Reidemeister 2 moves) between crossings in $\beta$ and flip each crossing separately using the fact that $\Delta^{-1}\sigma^{\pm 1}\Delta\cong(\sigma^{\pm 1})^*$, where $\sigma$ is a generator of the braid group. Let $N$ denote the number of crossings in $\beta$. The formal complex $\lrb{D}$ carries a $\{0,1\}^N$, or cubical, grading from the cube of resolutions.

\begin{lem}\label{lem:filtered_type_ii}
    Under the setup of Proposition \ref{prop:type_ii_idd}, there is a chain map $\widetilde{\fl}\colon\lrb{D}\to\lrb{D}$ representing $\fl$ in the homotopy category that is filtered with respect to the cubical grading on $\lrb{D}$. Moreover, the restriction of $\widetilde{\fl}$ on each summand $\lrb{D_v}$ of $\lrb{D}$, where $v$ is a vertex in the cube of resolutions, is given by the same flip map on $D_v$, i.e., performing the movie depicted in Figure~\ref{fig:translation_flip} on the resolution $D_v$.
\end{lem}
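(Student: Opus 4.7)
The plan is to imitate the filtration argument from Section~\ref{sec:proof_functoriality}. First, I would fix a concrete decomposition of the flip movie into elementary Reidemeister moves following \cite{chen2025flip}*{Proposition~4.4}: between each pair of adjacent crossings of $\beta$, insert a cancelling pair $\Delta^{-1}\Delta$ of half-twists via a sequence of Reidemeister 2 moves (performed strictly away from the crossings of $\beta$), and then flip each crossing $\sigma^{\pm 1}$ in $\beta$ individually through the monodromy $\Delta^{-1}\sigma^{\pm 1}\Delta\simeq(\sigma^{\pm1})^*$, which is realizable by a prescribed sequence of Reidemeister 2 and 3 moves. After all crossings have been flipped, one removes the remaining half-twist pairs via further Reidemeister 2 moves and a planar isotopy, landing back on $D$.

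Next, I would define $\widetilde{\fl}$ as the composition of the induced chain maps of these elementary moves, with the Reidemeister 3 chain maps chosen carefully. The Reidemeister 2 moves involved in introducing and cancelling the $\Delta^{-1}\Delta$ pairs, as well as those inside the monodromy move, are supported on strands away from any other $\beta$ crossing, so they trivially preserve the cubical grading on $\lrb{D}$ (indexed by resolutions of the crossings of $\beta$). For each Reidemeister 3 move appearing in the monodromy flipping a single crossing $c$ of $\beta$, I would invoke the Elias--Khovanov/Rouquier $1$-parameter affine family of chain homotopy equivalences and pick the parameter that produces a representative filtered with respect to the $\{0,1\}$-grading from resolving $c$, exactly as in the sweep-around argument. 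Since distinct crossings of $\beta$ are flipped in disjoint local moments of the movie, these filtered choices can be made independently, and the resulting composition $\widetilde{\fl}$ is filtered with respect to the full cubical grading on $\lrb{D}$.

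To identify the diagonal piece, fix a vertex $v\in\{0,1\}^N$ and consider the associated resolution $D_v$. Each elementary move in the decomposition above restricts, on the $v$-summand of the cube of resolutions, to the corresponding move performed on $D_v$ (with the $c$-th crossing replaced by its $v_c$-resolution): Reidemeister 2 moves away from $\beta$ restrict to the same Reidemeister 2 moves on $D_v$, and the filtered Reidemeister 3 chain map, by the Elias--Rouquier choice, restricts on the diagonal to the planar isotopy (or trivial move) obtained after resolving $c$ according to $v_c$. Concatenating, the diagonal piece of $\widetilde{\fl}$ on $\lrb{D_v}$ is precisely the chain map induced by performing the same flip movie on the resolved crossingless diagram $D_v$, which is the flip map on $D_v$ as required.

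The main obstacle is verifying that the Reidemeister 3 chain maps used in the monodromy $\Delta^{-1}\sigma^{\pm1}\Delta\simeq(\sigma^{\pm1})^*$ can be simultaneously chosen so that (a) every one of them is filtered with respect to the grading of the unique $\beta$-crossing they interact with, and (b) the diagonal restrictions paste together to give genuine flip movies on each $D_v$ rather than merely isotopies of the same homotopy class. Both points should follow from the fact that each monodromy affects only one $\beta$-crossing at a time, so the Elias--Rouquier parameter choices are independent and the diagonal assembly is local; this locality is the crux of the argument and deserves the most care.
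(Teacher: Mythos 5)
Your proposal is correct and takes essentially the same route as the paper: the paper's proof simply invokes \cite{chen2025flip}*{Lemma~4.5} for exactly this filtration argument (insert half-twist pairs, flip one crossing of $\beta$ at a time via $\Delta^{-1}\sigma^{\pm1}\Delta\cong(\sigma^{\pm1})^*$, and choose the Reidemeister~3 representatives filtered with respect to the unique $\beta$-crossing involved, as in the sweep-around section), which you reconstruct explicitly. The only point you do not address is the paper's remark that, in the projective setting, the ordering within the half twists changes when strands pass through the boundary of the disk; as the paper notes, this is irrelevant to the cubical grading, so it does not affect your argument.
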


\begin{proof}
    Argument similar to \cite{chen2025flip}*{Lemma 4.5} applies here with some slight modifications. First, \cite{chen2025flip}*{Lemma 4.5} is stated using Khovanov's arc algebra formalism \cite{khovanov2002functor} over $\FF_2$, but the argument remains valid without change under Bar-Natan's formalism over $\Z$ (up to overall sign), as remarked in \cite{chen2025flip}*{Remark 4.6}. Second, the ordering in the half twists changes when an isotopy passing through the boundary of the disk happens (for instance, from the second frame in Figure \ref{fig:class1middlemove} to the third), but this is irrelevant to the cubical grading.
\end{proof}

We can then use the map $\widetilde{\fl}$ in Lemma~\ref{lem:filtered_type_ii} to compute $\fl$. This map is filtered with respect to the cubical grading and preserves the homological grading, so $\widetilde{\fl}$ must preserve the cubical grading. As in Lemma \ref{lem:nowcrossingless}, we again reduce the proof of Proposition~\ref{prop:type_ii_idd} to the case that $D$ is a crossingless diagram. The calculations for class-0 and class-1 crossingless diagrams are still slightly different, and we treat them separately.

\begin{proof}[Proof of Proposition~\ref{prop:type_ii_idd}]

\begin{figure}[htbp]
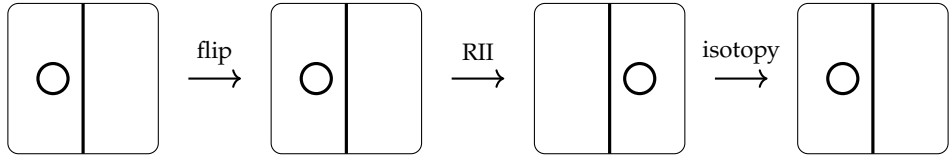

    \centering
    \classitypeiimove
    \caption{A type (ii) isotopy on a crossingless class-1 diagram.}
    \label{fig:class1typeiimove}
\end{figure}

    We first assume $D$ is a class-1 link diagram. By Lemma~\ref{lem:filtered_type_ii}, we may assume $D$ is crossingless. By functoriality, we may further assume the homologically essential component in $D$ is the $y$-axis circle $\{0\}\times[-1,1]/\sim$ in the disk model of $\RP^2$, and all other components are local unknots. Under the type (ii) isotopy, the homologically essential component in $D$ stays the same, and each unknot component first flips, passes over to the other side of the axis, and then travels back by a planar isotopy, as depicted in Figure~\ref{fig:class1typeiimove}. The moves from the first frame to the second and from the second to the third are the ones shown in Figure~\ref{fig:flipunknot} and Figure~\ref{fig:strandmoving} respectively. By Lemmas~\ref{lem:strandmoving} and \ref{lem:flipunknot}, we conclude that the type (ii) isotopy induces the identity morphism in $\Kbpm(\BNP[1])$ since \begin{equation*}
    \left(\begin{tikzpicture}[anchorbase,scale=0.5]
        \draw[dashed, thick] (1,0) arc (0:180:1 and 0.4);
        \draw[thick] (-1,0) arc (180:360:1 and 0.4);
        \draw[thick] (1,0) arc (0:180:1 and 1); 
        \fill (0,0.7) circle (0.1);
        \draw[thick] (1,3) arc (0:180:1 and 0.4);
        \draw[thick] (-1,3) arc (180:360:1 and 0.4) ;
        \draw[thick] (-1,3) arc (180:360:1 and 1);
    \end{tikzpicture}
    \,-\,
    \begin{tikzpicture}[anchorbase,scale=0.5]
        \draw[dashed, thick] (1,0) arc (0:180:1 and 0.4);
        \draw[thick] (-1,0) arc (180:360:1 and 0.4);
        \draw[thick] (1,0) arc (0:180:1 and 1); 
        \draw[thick] (1,3) arc (0:180:1 and 0.4);
        \draw[thick] (-1,3) arc (180:360:1 and 0.4) ;
        \draw[thick] (-1,3) arc (180:360:1 and 1);
        \fill (0,2.3) circle (0.1);
    \end{tikzpicture}\right)
    \circ\left(
    \begin{tikzpicture}[anchorbase,scale=0.5]
        \draw[dashed, thick] (1,0) arc (0:180:1 and 0.4);
        \draw[thick] (-1,0) arc (180:360:1 and 0.4);
        \draw[thick] (1,0) arc (0:180:1 and 1); 
        \fill (0,0.7) circle (0.1);
        \draw[thick] (1,3) arc (0:180:1 and 0.4);
        \draw[thick] (-1,3) arc (180:360:1 and 0.4) ;
        \draw[thick] (-1,3) arc (180:360:1 and 1);
    \end{tikzpicture}
    \,-\,
    \begin{tikzpicture}[anchorbase,scale=0.5]
        \draw[dashed, thick] (1,0) arc (0:180:1 and 0.4);
        \draw[thick] (-1,0) arc (180:360:1 and 0.4);
        \draw[thick] (1,0) arc (0:180:1 and 1); 
        \draw[thick] (1,3) arc (0:180:1 and 0.4);
        \draw[thick] (-1,3) arc (180:360:1 and 0.4) ;
        \draw[thick] (-1,3) arc (180:360:1 and 1);
        \fill (0,2.3) circle (0.1);
    \end{tikzpicture}\right)=\idd.
\end{equation*}

Now let $D$ be a class-0 link diagram. Again, we may assume $D$ is a crossingless diagram by Lemma~\ref{lem:filtered_type_ii}. By functoriality, we may further assume that all components in $D$ are local unknots. The effect of a type (ii) isotopy on $D$ is simply to flip each unknot component and then isotope it back. By Lemma~\ref{lem:flipunknot}, it remains to prove \begin{equation}\label{eqn:flip=id}
    \begin{tikzpicture}[anchorbase,scale=0.5]
        \draw[dashed, thick] (1,0) arc (0:180:1 and 0.4);
        \draw[thick] (-1,0) arc (180:360:1 and 0.4);
        \draw[thick] (1,0) arc (0:180:1 and 1); 
        \fill (0,0.7) circle (0.1);
        \draw[thick] (1,3) arc (0:180:1 and 0.4);
        \draw[thick] (-1,3) arc (180:360:1 and 0.4) ;
        \draw[thick] (-1,3) arc (180:360:1 and 1);
    \end{tikzpicture}
    \,-\,
    \begin{tikzpicture}[anchorbase,scale=0.5]
        \draw[dashed, thick] (1,0) arc (0:180:1 and 0.4);
        \draw[thick] (-1,0) arc (180:360:1 and 0.4);
        \draw[thick] (1,0) arc (0:180:1 and 1); 
        \draw[thick] (1,3) arc (0:180:1 and 0.4);
        \draw[thick] (-1,3) arc (180:360:1 and 0.4) ;
        \draw[thick] (-1,3) arc (180:360:1 and 1);
        \fill (0,2.3) circle (0.1);
    \end{tikzpicture}=\idd.
\end{equation}In the quotient category $\BNP[0]''$, we have $2=0$ and $E_1=0$, so the neck-cutting relation implies (\ref{eqn:flip=id}), which completes the proof.
\end{proof}

\bibliographystyle{amsalpha}
\bibliography{ref}

\end{document}